\let\oldincgraphics\includegraphics
\renewcommand{\includegraphics}[1]{\oldincgraphics{img/#1}}
\pgfplotsset{compat=newest,compat/show suggested version=false}
\definecolor{emerald}{rgb}{0.31, 0.78, 0.47}
\definecolor{myblue}{rgb}{0.10, 0.10, 0.90}
\definecolor{myred}{rgb}{1.0, 0.0, 0.25}
\definecolor{mygreen}{rgb}{0.1,0.5, 0.1}
\numberwithin{equation}{section}
\newtheorem{theorem}{Theorem}[section]
\newtheorem{lemma}[theorem]{Lemma}
\newtheorem{proposition}[theorem]{Proposition}
\theoremstyle{definition}
\newtheorem{definition}[theorem]{Definition}
\newtheorem{conjecture}[theorem]{Conjecture}
\newtheorem{remark}[theorem]{Remark}
\newtheorem{condition}[theorem]{Condition}
\newcommand{\thistheoremname}{}
\newtheorem{genericthm}[theorem]{\thistheoremname}
\renewcommand{\mod}{\operatorname{mod}}
\newcommand{\proj}{\operatorname{proj}}
\newcommand{\Hom}{\operatorname{Hom}}
\newcommand{\Pot}{\operatorname{Pot}}
\renewcommand{\dim}{\operatorname{dim}}
\newcommand{\End}{\operatorname{End}}
\newcommand{\doublelrangle}[1]{\langle\hspace{-0.7mm}\langle #1\rangle\hspace{-0.7mm}\rangle}
\renewcommand{\fps@figure}{htbp}
\renewcommand{\fps@table}{htbp}
\begin{document}
\title[]{Stable Brauer-Thrall II' conjecture for finite-dimensional Jacobian algebras}
\author{Mohamad Haerizadeh}
\email{hyrizadeh@gmail.com}
\author{Toshiya Yurikusa}
\email{yurikusa@omu.ac.jp}
\subjclass[2020]{16G20, 16G10}
\keywords{Jacobian algebras, quivers with potentials, stability conditions, Brauer-Thrall conjecture, $\tau$-tilting theory}
\date{\today}

\begin{abstract}
We prove that finite-dimensional Jacobian algebras associated with non-degenerate quivers with potentials satisfy the stable Brauer-Thrall II' conjecture. In particular, this implies that the brick Brauer-Thrall II' conjecture (also known as the $\tau$-Brauer-Thrall II' conjecture) holds for finite-dimensional Jacobian algebras.
\end{abstract}

\maketitle
\tableofcontents
\section{Introduction}

Stability conditions in the sense of King \cite{Ki94} play a fundamental role in the study of finite-dimensional algebras. In this context, Pfeifer introduced in \cite{Pf25} the stable Brauer-Thrall II' conjecture, which predicts that every $\tau$-tilting infinite algebra admits infinitely many isomorphism classes of $\theta$-stable modules of the same dimension vector. This framework strengthens an earlier brick version of Brauer-Thrall II' formulated and studied in \cite[Conjecture~1.3(2)]{Mo22} and \cite[Conjecture~2]{STV21}, where $\theta$-stable modules are replaced with bricks. Despite these developments, the stable Brauer-Thrall II' conjecture remains widely open in general.

In this paper, we prove the conjecture for a well-studied class of finite-dimensional algebras, namely, Jacobian algebras. They are introduced by Derksen--Weyman--Zelevinsky in \cite{DWZ08}, and form a well-behaved family of finite-dimensional algebras in the representation-theoretic approach to cluster algebras.

\begin{theorem}[{\ref{thm:conj}}]
Every finite-dimensional Jacobian algebra associated with a non-degenerate quiver with potential satisfies the stable Brauer-Thrall II' conjecture. In particular, it satisfies the brick Brauer-Thrall II' conjecture.
\end{theorem}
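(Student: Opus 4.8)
\emph{Strategy.} The plan is to exploit the cluster-theoretic structure of a non-degenerate quiver with potential and, via mutation, reduce the assertion to a short list of well-understood base cases. Combining \cite{DWZ08} with work of Plamondon, Demonet--Iyama--Jasso and Fomin--Zelevinsky, for a non-degenerate quiver with potential $(Q,W)$ with $J(Q,W)$ finite-dimensional the following are equivalent: $J(Q,W)$ is $\tau$-tilting infinite; the cluster algebra of $Q$ is of infinite type; $Q$ is not mutation-equivalent to a Dynkin quiver; the $g$-vector fan of $J(Q,W)$ is incomplete. Assume these. It then suffices to produce, for some stability parameter $\theta$ and dimension vector $d$, a positive-dimensional family of pairwise non-isomorphic $\theta$-stable $J(Q,W)$-modules of dimension vector $d$ (i.e.\ $\dim \mathcal{M}^{\theta}_{d}(J(Q,W)) \geq 1$); this is the stable Brauer--Thrall II$'$ property of \cite{Pf25}, and since $\theta$-stable modules are bricks it implies the brick version of \cite{Mo22,STV21}. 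The key structural point is that this property depends only on the mutation class of $(Q,W)$: the wall-and-chamber structure of $J(Q,W)$ together with the moduli of semistable modules on its walls assembles into the stability scattering diagram, which (by Bridgeland's Hall-algebra construction, compatibly with Keller--Yang mutation and Keller--Nagao wall-crossing) transforms covariantly under mutation by a piecewise-linear change of coordinates; hence ``some wall supports a positive-dimensional family of $\theta$-stable modules'' is mutation-invariant, and we may freely replace $(Q,W)$ by any mutation-equivalent quiver with potential.

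\emph{Base cases.} We distinguish three (overlapping) cases. (1) If $Q$ is mutation-equivalent to an acyclic quiver $Q'$, then $(Q,W)\sim(Q',0)$, so it suffices to treat $J(Q',0)=kQ'$; here $Q'$ is non-Dynkin, so $kQ'$ is representation-infinite, and by Kac's theorem the general representation of dimension vector $d$, for $d$ a positive imaginary Schur root of $Q'$, is a brick, these forming a family of positive dimension; they are $\theta$-stable for $\theta$ in the relative interior of the wall $\{\theta:\theta(d)=0\}$ by \cite{Ki94}. (2) If $Q$ is of infinite mutation type, it has at least three vertices, and by Felikson--Shapiro--Tumarkin some mutation-equivalent quiver $Q'$ has two vertices $i,j$ joined by at least three arrows; the full subquiver on $\{i,j\}$ being acyclic, no Jacobian relation of $(Q',W')$ survives in $B:=J(Q',W')/\langle e_k:k\neq i,j\rangle$, so $B$ is a generalized Kronecker algebra $K_r$ with $r\geq3$. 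For $d=(1,1)$ and a suitable $\theta$ with $\theta(d)=0$, every nonzero $K_r$-representation of dimension vector $d$ is $\theta$-stable, giving a $\mathbb{P}^{r-1}$-family of non-isomorphic $\theta$-stable $B$-modules; inflating along $J(Q',W')\twoheadrightarrow B$ (submodules, and the identity $\theta(d)=0$, being preserved with $\theta$ extended by $0$) yields the family over $J(Q',W')$. (3) Otherwise $Q$ is of finite mutation type, not mutation-equivalent to an acyclic quiver, and of infinite type; by Felikson--Shapiro--Tumarkin it then arises from a triangulated surface or is one of the finitely many exceptional mutation-finite types (the latter either acyclic---covered by (1)---or a short list checked by hand). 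Surface-type Jacobian algebras are tame by Labardini-Fragoso's string/band description, so being representation-infinite they carry a one-parameter family of rank-one band modules of some dimension vector $d$, whose general member is a brick and which are $\theta$-stable for $\theta$ in the relative interior of their wall (when tameness is delicate, e.g.\ for the once-punctured torus, one can alternatively use a tame hereditary quotient such as $K_2$).

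\emph{Main obstacle.} The decisive difficulty is the mutation-invariance invoked above: proving that ``some wall of the wall-and-chamber structure supports a positive-dimensional family of $\theta$-stable modules'' is preserved under mutation of the quiver with potential. This requires a precise mutation-covariance statement for the stability scattering diagram of a finite-dimensional Jacobian algebra, retaining the moduli/Donaldson--Thomas data on its walls, and it must accommodate the fact that finite-dimensionality of the Jacobian algebra is not itself preserved by mutation---only the relevant $\tau$-tilting- and semistability-theoretic data is. Equivalently, one may argue intrinsically: were $J(Q,W)$ to have only finitely many bricks of each dimension vector while being $\tau$-tilting infinite, the infinitely many brick dimension vectors would accumulate in a direction outside the cluster fan, defining a wall there, and the content of the base cases is exactly that such an accumulation wall of a Jacobian algebra must carry a one-parameter family of bricks---contradicting the assumption and forcing infinitely many bricks (indeed $\theta$-stable modules) of a single dimension vector. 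A secondary point is case (3): confirming that the band modules exhibited on a surface are genuinely $\theta$-stable rather than merely bricks, and treating uniformly the punctured surfaces.
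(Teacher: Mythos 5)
Your argument has a genuine gap at exactly the point you flag as the ``decisive difficulty'': the claim that Condition~\ref{cond:sBT} (or ``some wall supports a positive-dimensional family of $\theta$-stable modules'') is invariant under QP-mutation. You invoke scattering diagrams, Hall algebras, and Keller--Yang/Keller--Nagao wall-crossing, but none of these is developed into an actual proof that the existence of infinitely many $\theta$-stable modules of a fixed dimension vector transfers across a mutation; the ``intrinsic'' accumulation-wall argument you sketch at the end is likewise not a proof. Since all three of your base cases (acyclic, mutation-infinite, mutation-finite surface type) are reached only after replacing $(Q,W)$ by a mutation-equivalent QP, the entire reduction rests on this unproved step, and as stated the proposal does not establish the theorem. (A side remark: your worry that finite-dimensionality of the Jacobian algebra is not preserved under mutation is unfounded --- this is \cite[Corollary~6.6]{DWZ08} --- but that does not repair the main gap.)

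The paper avoids mutation-invariance entirely. Its reduction uses \emph{restriction to full sub-quivers}: by Seven's theorem (Lemma~\ref{lem:nDynaffsub}), a connected non-Dynkin quiver contains a full sub-quiver of affine type or a generalized Kronecker quiver $K_m$ ($m\ge 3$), and restriction of a QP induces an honest surjective algebra homomorphism $\mathcal{J}(Q,W)\twoheadrightarrow\mathcal{J}(Q|_I,W|_I)$ preserving non-degeneracy (Proposition~\ref{prop:restrict}). Stability lifts along surjections (Proposition~\ref{prop:quotient stable}), which is elementary, unlike mutation-covariance of stability data. The base case is then handled not by exhibiting explicit families of stable modules but by quoting that affine-type non-degenerate Jacobian algebras are $E$-tame and not $E$-finite \cite[Theorem~5.5]{HY25} and applying Pfeifer's theorem that $E$-tame algebras satisfying Demonet's conjecture satisfy the stable Brauer--Thrall~II$'$ property; the $K_m$ case is reduced to $K_2$ by killing arrows. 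Your explicit constructions in the base cases (Kac's theorem for imaginary Schur roots, the $\mathbb{P}^{r-1}$-family for $K_r$ in dimension $(1,1)$) are essentially sound and close in spirit to what underlies the $E$-tame machinery, but they cannot carry the proof without a valid reduction step, which is what is missing.
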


For a non-degenerate quiver with potential $(Q,W)$, every full sub-quiver $Q|_I$ with the induced potential $W|_I$ yields a natural surjective algebra homomorphism
\[
\mathcal{J}(Q,W)\twoheadrightarrow\mathcal{J}(Q|_I,W|_I).
\]
This structural property will be used in a reduction step in the proof of our main result.

A key input is a structural theorem of Seven \cite{Se07}, which asserts that any connected quiver that is not of Dynkin type contains, as a full sub-quiver, either a quiver of affine type or a generalized Kronecker quiver $K_m$ with $m\ge 3$. In particular, if $\mathcal{J}(Q,W)$ is $\tau$-tilting infinite, then one may restrict to such a full sub-quiver $Q|_I$. We show that the associated Jacobian algebra $\mathcal{J}(Q|_I,W|_I)$ already satisfies the stable Brauer-Thrall II' conjecture. Since $\theta$-stability is preserved under surjective algebra homomorphisms, these $\theta$-stable modules lift to $\theta$-stable modules of $\mathcal{J}(Q,W)$, which yields the stable Brauer-Thrall II' property for every finite-dimensional Jacobian algebra.

The paper is organized as follows. Section~\ref{sec:E} reviews $E$-invariants and the finiteness conditions arising in $2$-term silting theory. Section~\ref{sec:stability} recalls King's stability and introduces the stable Brauer-Thrall II' condition. Section~\ref{sec:Jac} collects the background on quiver mutations, quivers with potentials, and Jacobian algebras. Finally, Section~\ref{sec:proof} establishes the stable Brauer-Thrall II' conjecture for finite-dimensional Jacobian algebras.

\medskip\noindent{\bf Conventions and notations}.
Throughout this paper, $K$ denotes an algebraically closed field. We assume that all algebras are associative, unital, and basic $K$-algebras, and that all algebra homomorphisms are unital. We write $\Lambda$ for a finite-dimensional algebra. We denote by
\begin{itemize}
\item $\mod\Lambda$ the category of finite-dimensional right $\Lambda$-modules;
\item $\proj\Lambda$ the category of finite-dimensional projective right $\Lambda$-modules;
\item $K^b(\proj\Lambda)$ the homotopy category of bounded complexes in $\proj\Lambda$.
\end{itemize}
The Grothendieck group of $\mod\Lambda$ (resp.\ $K^b(\proj\Lambda)$) is denoted by $K_0(\mod\Lambda)$ (resp.\ $K_0(\proj\Lambda)$).

Given a quiver with potential $(Q,W)$, the Jacobian algebra $\mathcal{J}(Q,W)$ is defined in general as a quotient of the complete path algebra $K\doublelrangle{Q}$ by the closure of the Jacobian ideal, and may be infinite-dimensional. In this paper, we only consider quivers with potential for which $\mathcal{J}(Q,W)$ is finite-dimensional. The definition of quivers with potential and their Jacobian algebras will be recalled in Section~\ref{sec:QP}.

\section{$E$-invariants and finiteness conditions}\label{sec:E}

We recall some basic notions of $2$-term silting theory in the homotopy category $K^b(\proj\Lambda)$.

\begin{definition}
A complex $X\in K^b(\proj\Lambda)$ is called
\begin{itemize}
\item \emph{presilting} if $\Hom_{K^b(\proj\Lambda)}(X,X[m])=0$ for all $m>0$;
\item \emph{silting} if it is presilting and the smallest thick subcategory containing $X$ is $K^b(\proj\Lambda)$;
\item \emph{$2$-term} if $X^i=0$ for all $i\neq -1,0$.
\end{itemize}
\end{definition}

Note that a $2$-term presilting complex $X\in K^b(\proj\Lambda)$ is silting if and only if the number of non-isomorphic indecomposable direct summands of $X$ coincides with that of $\Lambda$ \cite[Proposition~2.16]{Ai13}.

\begin{definition}[{\cite[Definition 1.1]{DIJ19}}]
The algebra $\Lambda$ is called \emph{$\tau$-tilting finite} if the number of isomorphism classes of basic $2$-term silting complexes in $K^b(\proj\Lambda)$ is finite; otherwise it is \emph{$\tau$-tilting infinite}.
\end{definition}

We next recall the definition of the $E$-invariant, which is formulated in terms of two-term complexes in $K^b(\proj\Lambda)$. For each $g\in K_{0}(\proj\Lambda)$, there exist unique projective modules $P^{-1},P^{0}\in\proj\Lambda$ such that they do not share any non-zero direct summands and $g=[P^0]-[P^{-1}]\in K_{0}(\proj\Lambda)$. We set
\[
\Hom_{\Lambda}(g):= \Hom_{\Lambda}(P^{-1},P^{0}).
\]

Every morphism $a:P^{-1}\to P^{0}$ in $\proj\Lambda$ can be regarded as a two-term complex
\[
a=(P^{-1}\xrightarrow{a}P^{0})
\]
in $K^b(\proj\Lambda)$. For two such morphisms $a_1,a_2$, their \emph{$E$-invariant} is defined by
\[
e(a_1,a_2):=\dim_{K}\Hom_{K^b(\proj\Lambda)}(a_1,a_2[1]).
\]
For $g_1,g_2\in K_0(\proj\Lambda)$, we define the $E$-invariant of $g_1$ and $g_2$ by
\[
e(g_1,g_2):= \min\{e(a_1,a_2)\mid a_1\in\Hom_{\Lambda}(g_1),a_2\in\Hom_{\Lambda}(g_2)\}.
\footnote{Since the function $e(-,?):\Hom_{\Lambda}(g_1)\times\Hom_{\Lambda}(g_2)\rightarrow\mathbb{Z}$ is upper semicontinuous, $e(g_1,g_2)$ is equal to $e(a,b)$ for a general pair $(a_1,a_2)\in\Hom_{\Lambda}(g_1)\times\Hom_{\Lambda}(g_2)$.}
\]

\begin{definition}[{\cite[Definition~6.3]{AsIy24}}]
We say that $g \in K_0(\proj\Lambda)$ is \emph{rigid} if there exists a $2$-term presilting complex $X$ such that $g=[X] \in K_0(\proj\Lambda)$, and \emph{tame} if $e(g, g) = 0$. The algebra $\Lambda$ is called \emph{$E$-finite} (resp.\ \emph{$E$-tame}) if every element of $K_0(\proj\Lambda)$ is rigid (resp.\ tame).
\end{definition}

\begin{conjecture}[Demonet’s conjecture\,{\cite[Question~3.49]{De17}}]
The algebra $\Lambda$ is $\tau$-tilting finite if and only if it is $E$-finite.
\end{conjecture}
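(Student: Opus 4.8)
\medskip
\noindent\textbf{Proof strategy.}
I would prove the two implications separately, since only one of them is within reach of current techniques.

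\emph{From $\tau$-tilting finiteness to $E$-finiteness.}
Assume $\Lambda$ is $\tau$-tilting finite. By \cite{DIJ19} there are then finitely many basic $2$-term silting complexes, and the associated cones $C(X):=\sum_i\mathbb{R}_{\ge 0}[X_i]$, as $X=\bigoplus_i X_i$ runs over these, form a complete fan $\Sigma$ in the real Grothendieck group $K_0(\proj\Lambda)\otimes_{\mathbb{Z}}\mathbb{R}$. Given $g\in K_0(\proj\Lambda)$, choose a maximal cone $C(X)\ni g$ and write $g=\sum_i a_i[X_i]$ with all $a_i\ge 0$. Since the classes of the summands of a $2$-term silting complex form a $\mathbb{Z}$-basis of $K_0(\proj\Lambda)$ and $g$ is integral, each $a_i$ lies in $\mathbb{Z}_{\ge 0}$, so $\bigoplus_i X_i^{\oplus a_i}$ is $2$-term presilting with class $g$; hence $g$ is rigid and $\Lambda$ is $E$-finite. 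The only external inputs are the completeness of the $g$-vector fan for $\tau$-tilting finite algebras and the fact that direct sums and multiplicities of presilting complexes are presilting.

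\emph{From $E$-finiteness to $\tau$-tilting finiteness.}
This is the substance of the conjecture, and I would argue contrapositively. By $2$-term silting completion, every $2$-term presilting complex is a summand of a $2$-term silting complex; together with the computation above this shows that the rigid elements of $K_0(\proj\Lambda)$ are exactly the lattice points lying in the support $|\Sigma|=\bigcup_X C(X)$. It therefore suffices to show that if $\Lambda$ is $\tau$-tilting infinite, then $|\Sigma|$ omits a lattice point. For $\tau$-tilting infinite $\Lambda$ the fan $\Sigma$ fails to be complete, so $|\Sigma|$ is a proper subset of $K_0(\proj\Lambda)\otimes_{\mathbb{Z}}\mathbb{R}$, and, being a union of cones, it is scaling-invariant; hence its non-empty complement is scaling-invariant too, and it is enough to find a single rational ray disjoint from $|\Sigma|$, since any lattice point on it is then non-rigid.

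\emph{A strategy, and the main obstacle.}
I would split along the tame/wild dichotomy. For wild $\Lambda$ one expects the complement of $|\Sigma|$ to contain a full-dimensional rational cone --- the model case being the wild Kronecker quiver $K_m$ with $m\ge 3$, where this complement is precisely the imaginary cone $\{x:q(x)<0\}$ of the integral form $q(x,y)=x^2+y^2-mxy$, which is full-dimensional for $m\ge 3$ and plainly contains lattice points. For tame $\Lambda$ of infinite type the complement is thin --- for $K_2$ it degenerates to the ray of isotropic vectors of $q(x,y)=(x-y)^2$ --- but it should still contain the ray spanned by the $g$-vector of a band module, whose dimension vector, hence whose minimal projective presentation and $g$-vector, is integral. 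The obstruction to turning this into a theorem is that making either half uniform over \emph{all} finite-dimensional tame (respectively wild) algebras requires the full force of the relevant structure theory --- the module-variety and generic-module machinery on the tame side, control of bounded-dimensional families of bricks on the wild side --- and it is here that the difficulty concentrates, which is why Demonet's conjecture remains open in general. In the Jacobian setting the reduction becomes manageable: by Seven's theorem \cite{Se07} a $\tau$-tilting infinite Jacobian algebra contains a full sub-quiver of affine type or of type $K_m$ with $m\ge 3$, reducing the problem to the two model cases above, and the resulting non-rigid classes lift along the surjection $\mathcal{J}(Q,W)\twoheadrightarrow\mathcal{J}(Q|_I,W|_I)$ --- which is the route taken in the present paper through the stable Brauer-Thrall II' conjecture.
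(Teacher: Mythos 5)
This statement is a conjecture, and the paper does not prove it: it only establishes the forward implication (Proposition~\ref{prop:tau-E-fin}, citing the completeness of the $g$-vector fan from Asai), which is exactly the argument you give, and it correctly leaves the converse open except in the Jacobian case, where your description of the reduction via Seven's theorem and restriction of QPs matches Theorem~\ref{thm:fin TFAE} and Lemma~\ref{lem:sBT}. Your assessment of what is provable, what is open, and why, is accurate and consistent with the paper.
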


One direction of Demonet's conjecture is already known, as follows.

\begin{proposition}\label{prop:tau-E-fin}
If $\Lambda$ is $\tau$-tilting finite, then it is $E$-finite.
\end{proposition}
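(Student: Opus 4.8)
The plan is to deduce this from the completeness of the $g$-vector fan of a $\tau$-tilting finite algebra. Recall that to each basic $2$-term silting complex $T=T_1\oplus\cdots\oplus T_n$ with indecomposable summands $T_i$ one attaches the cone $C(T)=\sum_{i=1}^n\mathbb{R}_{\ge 0}[T_i]\subseteq K_0(\proj\Lambda)\otimes_{\mathbb{Z}}\mathbb{R}$, and these cones together with all their faces form a fan. I would invoke two standard inputs: (i) by Adachi--Iyama--Reiten \cite{AIR14} the classes $[T_1],\dots,[T_n]$ form a $\mathbb{Z}$-basis of $K_0(\proj\Lambda)$, so that $C(T)\cap K_0(\proj\Lambda)=\sum_{i=1}^n\mathbb{Z}_{\ge 0}[T_i]$; and (ii) by Demonet--Iyama--Jasso \cite{DIJ19}, the $\tau$-tilting finiteness of $\Lambda$ is equivalent to the completeness of this fan, i.e.\ its support being all of $K_0(\proj\Lambda)\otimes_{\mathbb{Z}}\mathbb{R}$.

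Granting these, the argument is short. Fix $g\in K_0(\proj\Lambda)$. By (ii) there is a basic $2$-term silting complex $T=\bigoplus_{i=1}^n T_i$ with $g\in C(T)$, hence by (i) one may write $g=\sum_{i=1}^n a_i[T_i]$ with $a_i\in\mathbb{Z}_{\ge 0}$. Since $T$ is presilting, every object of $\add T$ is again a $2$-term presilting complex; in particular $X:=\bigoplus_{i=1}^n T_i^{\oplus a_i}$ is $2$-term presilting and satisfies $[X]=\sum_{i=1}^n a_i[T_i]=g$. Thus $g$ is rigid, and since $g$ was arbitrary, $\Lambda$ is $E$-finite. (The case $g=0$ is handled by the zero complex.)

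The only real content lies in the geometric input (ii), which is precisely the point at which $\tau$-tilting finiteness is used; the rest is elementary linear algebra combined with the observation that direct sums of a presilting complex remain presilting. If one wished to avoid quoting fan-completeness as a black box, one could instead construct the required silting complex by starting from $\Lambda$ itself and performing a finite sequence of silting mutations guided by the walls separating $g$ from the cone of $\Lambda$, using the connectedness of the $2$-term silting exchange graph in the $\tau$-tilting finite case; but this essentially reproves completeness, so the route above is the cleanest.
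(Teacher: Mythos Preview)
Your argument is correct: fan completeness for $\tau$-tilting finite algebras together with the fact that the $g$-vectors of a basic $2$-term silting complex form a $\mathbb{Z}$-basis yields a presilting complex realising any given $g$. The paper's own proof is simply a citation to \cite[Proposition~4.8]{As21}, and the content behind that reference is precisely the fan-completeness argument you have written out, so your approach coincides with the one the paper invokes.
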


\begin{proof}
The assertion follows from \cite[Proposition~4.8]{As21} together with the definition of $E$-finiteness (see also \cite[Figure~1]{AsIy24}).
\end{proof}

\begin{remark}
By the above proposition, Demonet's conjecture can be stated as follows: If $\Lambda$ is $\tau$-tilting infinite, then it is not $E$-finite.
\end{remark}

\section{Stability and the stable Brauer-Thrall II' conjecture}\label{sec:stability}

In this section, we recall stability conditions in the sense of King \cite{Ki94} and introduce the stable Brauer-Thrall II' condition, which is the main object of study in this paper.

Using the classes of indecomposable projective and simple $\Lambda$-modules as $\mathbb{Z}$-bases, we identify $K_0(\proj\Lambda)\cong K_0(\mod\Lambda)\cong\mathbb{Z}^n$. For $M\in\mod\Lambda$, its \emph{dimension vector} is the class $[M]\in K_0(\mod\Lambda)\cong\mathbb{Z}^n$. We denote by $\langle-,-\rangle$ the canonical pairing between $K_0(\proj\Lambda)$ and $K_0(\mod\Lambda)$.

\begin{definition}[{\cite{Ki94}}]
Let $\theta\in K_0(\proj\Lambda)$. A module $M\in\mod\Lambda$ is called
\begin{itemize}
\item \emph{$\theta$-semistable} if $\langle\theta,[M]\rangle = 0$ and $\langle\theta,[M']\rangle\le 0$ for all submodules $M'\subseteq M$;
\item \emph{$\theta$-stable} if $\langle\theta,[M]\rangle = 0$ and $\langle\theta,[M']\rangle < 0$ for all proper submodules $M'\subset M$.
\end{itemize}
\end{definition}

The full sub-category of $\mod\Lambda$ consisting of $\theta$-semistable modules is defined by
\[
\mathcal{W}_{\theta}^{\Lambda}:=\{M\in\mod\Lambda\mid \text{$M$ is $\theta$-semistable}\}.
\]

Stability conditions allow us to formulate the following condition for $\Lambda$.

\begin{condition}\label{cond:sBT}
There exist a dimension vector $\mathbf{d}$ and $\theta\in K_0(\proj\Lambda)$ such that there are infinitely many isomorphism classes of $\theta$-stable $\Lambda$-modules of dimension vector $\mathbf{d}$.
\end{condition}

The following Brauer-Thrall type conjecture was posed by Pfeifer in \cite[Conjecture~5.2]{Pf25}. He proposed a strengthening of the brick Brauer-Thrall II' conjecture (also known as the $\tau$-Brauer-Thrall II' conjecture), which was formulated and studied in \cite[Conjecture 1.3(2)]{Mo22} and \cite[Conjecture 2]{STV21}.

\begin{conjecture}[Stable Brauer-Thrall II' conjecture]
If $\Lambda$ is $\tau$-tilting infinite, then it satisfies Condition~\ref{cond:sBT}.
\end{conjecture}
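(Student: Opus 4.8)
My strategy is to translate $\tau$-tilting infiniteness into the language of $E$-invariants from Section~\ref{sec:E} and then into the geometry of King's moduli spaces \cite{Ki94}, extracting the required infinite family from a positive-dimensional moduli space. As a first observation, any $2$-term presilting complex $X$ satisfies $\Hom_{K^b(\proj\Lambda)}(X,X[1])=0$, so every rigid class has $e(g,g)=0$ and is therefore tame; hence $E$-finiteness implies $E$-tameness. Combining this with Proposition~\ref{prop:tau-E-fin} yields the chain of known implications
\[
\Lambda \text{ not } E\text{-tame} \ \Longrightarrow\ \Lambda \text{ not } E\text{-finite} \ \Longrightarrow\ \Lambda \text{ is } \tau\text{-tilting infinite}.
\]
The plan is to establish, under the hypothesis that $\Lambda$ is $\tau$-tilting infinite, the two missing implications: first that $\Lambda$ fails to be $E$-tame, and then that the failure of $E$-tameness already forces Condition~\ref{cond:sBT}.

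\medskip
\emph{Step A: from $\tau$-tilting infinite to a non-tame class.} First I would produce a single $g\in K_0(\proj\Lambda)$ with $e(g,g)>0$. This asserts that $\tau$-tilting infiniteness implies the failure of $E$-tameness, and it refines the open direction of Demonet's conjecture recorded in the Remark after Proposition~\ref{prop:tau-E-fin}. The idea is to exploit the infinitude of basic $2$-term silting complexes: their $g$-vectors span infinitely many maximal cones in $K_0(\proj\Lambda)\otimes\mathbb{R}$, and I would look for a stability vector lying on the common boundary of infinitely many of these cones, take $g$ to be a primitive class on that boundary, and argue that the accumulation of silting cones is detected by positivity of the self-$E$-invariant $e(g,g)$ of the generic two-term complex representing $g$.

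\medskip
\emph{Step B: from a non-tame class to infinitely many stable modules.} Given $g=[P^0]-[P^{-1}]$ with $e(g,g)>0$, I would fix a generic representative $a\in\Hom_\Lambda(g)$ realising this minimum (upper semicontinuity, as in the footnote of Section~\ref{sec:E}), let $\mathbf{d}$ be the dimension vector of its $0$-th homology, and choose $\theta\in K_0(\proj\Lambda)$ adapted to $g$ so that this homology is $\theta$-semistable. The positivity $e(g,g)=\dim_K\Hom_{K^b(\proj\Lambda)}(a,a[1])>0$ should force the King moduli space of $\theta$-semistable modules of dimension vector $\mathbf{d}$ to have positive dimension; since this moduli space is a projective variety over the algebraically closed field $K$, it then has infinitely many closed points, and the open stable locus—once shown to be nonempty—supplies infinitely many pairwise non-isomorphic $\theta$-stable modules of dimension vector $\mathbf{d}$, which is exactly Condition~\ref{cond:sBT}.

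\medskip
The main obstacle is Step~A. The passage from the discrete condition of $\tau$-tilting infiniteness to the geometric condition $e(g,g)>0$ is essentially Demonet's conjecture, which remains open in general; there is no structural input, for an arbitrary finite-dimensional $\Lambda$, guaranteeing a non-tame class. This is precisely why the present paper specialises to Jacobian algebras, where quiver combinatorics—via Seven's structure theorem and the surjections $\mathcal{J}(Q,W)\twoheadrightarrow\mathcal{J}(Q|_I,W|_I)$ onto Kronecker or affine-type Jacobian algebras—circumvents Step~A by reducing to explicit base cases. A secondary difficulty lies in Step~B: a positive-dimensional semistable moduli space could a priori consist entirely of strictly semistable points, so one must further refine $g$ to a brick-labelled class whose generic semistable module is actually stable in order to guarantee that the stable locus is nonempty.
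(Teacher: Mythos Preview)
The statement you are attempting is recorded in the paper as an open \emph{conjecture}; the paper establishes it only for finite-dimensional Jacobian algebras (Theorem~\ref{thm:conj}), not for arbitrary $\Lambda$. So there is no general proof to compare against, but your proposal contains a genuine error rather than merely an incomplete step.

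Your Step~A is not just difficult---it is false. You assert that $\tau$-tilting infiniteness should force the failure of $E$-tameness, i.e.\ the existence of some $g$ with $e(g,g)>0$. But Lemma~\ref{lem:affine E} of this very paper shows that for any non-degenerate QP $(Q,W)$ with $Q$ of affine type, the Jacobian algebra $\mathcal{J}(Q,W)$ is $E$-tame and not $E$-finite, hence $\tau$-tilting infinite by Proposition~\ref{prop:tau-E-fin}. Already the Kronecker algebra $KA_1^{(1)}$ is a counterexample: it is $\tau$-tilting infinite yet every class $g\in K_0(\proj\Lambda)$ satisfies $e(g,g)=0$. No wall-and-chamber argument will produce the non-tame class you are looking for, because none exists.

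This is not incidental. The paper's route to Condition~\ref{cond:sBT} in the Jacobian case runs \emph{through} $E$-tameness, not through its failure: Lemma~\ref{lem:sBT E-tame} (resting on Pfeifer's Theorem~\ref{thm:Pf}) shows that an algebra which is $E$-tame but not $E$-finite already satisfies Condition~\ref{cond:sBT}, and Seven's theorem supplies an affine-type quotient to which this applies, after which Proposition~\ref{prop:sBT pres} lifts the stable modules back. Your Step~B, by contrast, tries to extract infinitely many stable modules from $e(g,g)>0$; this points in the wrong direction for the cases that actually arise, and---as you yourself note---even when such a $g$ exists it is unclear how to rule out a moduli space consisting entirely of strictly semistable points.
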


Recall that a module $M\in\mod\Lambda$ is called a \emph{brick} if $\End_{\Lambda}(M)$ is a division algebra. Over an algebraically closed field $K$, this is equivalent to $\End_{\Lambda}(M)\cong K$.

\begin{conjecture}[{Brick Brauer-Thrall II' conjecture}]
If $\Lambda$ is $\tau$-tilting infinite, then there exists a dimension vector $\mathbf{d}$ such that there are infinitely many isomorphism classes of bricks $M\in\mod\Lambda$ of dimension vector $\mathbf{d}$.
\end{conjecture}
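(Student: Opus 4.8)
The plan is to combine the two structural ingredients announced above: Seven's theorem, which supplies a controlled full sub-quiver, and the fact that $\theta$-stability is inherited along a surjective algebra homomorphism. Since $\mathcal{J}(Q,W)$ is $\tau$-tilting infinite and the Jacobian algebra of a disconnected quiver is the product of those of its connected components, some connected component $Q_0$ of $Q$ satisfies that $\mathcal{J}(Q_0,W|_{Q_0})$ is $\tau$-tilting infinite; moreover $Q_0$ is not of Dynkin type, for otherwise $\mathcal{J}(Q_0,W|_{Q_0})$ would be $\tau$-tilting finite as the Jacobian algebra of a Dynkin-type quiver (when the underlying graph is Dynkin the quiver is acyclic, $W|_{Q_0}=0$, and $KQ_0$ is representation-finite). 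By Seven's theorem, $Q_0$, hence $Q$, contains a full sub-quiver $Q|_I$ that is either $K_m$ with $m\ge 3$ or of affine type, and the induced surjection $\mathcal{J}(Q,W)\twoheadrightarrow\mathcal{J}(Q|_I,W|_I)$ (which exists because $(Q,W)$ is non-degenerate) presents $\mathcal{J}(Q|_I,W|_I)$ as a finite-dimensional quotient. Identifying $\mod\mathcal{J}(Q|_I,W|_I)$ with the full subcategory of $\mod\mathcal{J}(Q,W)$ of modules killed by the kernel of this surjection, which is closed under submodules, and extending a weight $\bar\theta\in K_0(\proj\mathcal{J}(Q|_I,W|_I))$ by zero to a weight $\theta\in K_0(\proj\mathcal{J}(Q,W))$, one sees directly from the definitions that a $\mathcal{J}(Q|_I,W|_I)$-module is $\bar\theta$-stable precisely when it is $\theta$-stable as a $\mathcal{J}(Q,W)$-module, with unchanged dimension vector. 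It therefore suffices to establish Condition~\ref{cond:sBT} for $\mathcal{J}(Q|_I,W|_I)$.

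I would then handle the base cases. If $Q|_I=K_m$ with $m\ge 3$, then $Q|_I$ is acyclic, so $W|_I=0$ and $\mathcal{J}(Q|_I,W|_I)=KK_m$; take $\mathbf{d}=(1,1)$ and $\theta$ with value $+1$ at the source and $-1$ at the sink. A representation of $K_m$ of dimension vector $(1,1)$ is a non-zero tuple in $K^m$; its only non-zero proper submodule is the line at the sink vertex, of dimension vector $(0,1)$, and $\langle\theta,(0,1)\rangle=-1<0$ while $\langle\theta,(1,1)\rangle=0$, so every such representation is $\theta$-stable, and the isomorphism classes form $\mathbb{P}^{m-1}(K)$, which is infinite. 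If $Q|_I$ is of affine type and acyclic, then $\mathcal{J}(Q|_I,W|_I)=KQ|_I$ is tame hereditary; let $\delta$ be its minimal positive imaginary root and let $\theta\in K_0(\proj KQ|_I)$ represent the defect, normalised to be strictly negative on nonzero preprojective modules. For each $\lambda$ in the infinite parameter set of homogeneous tubes, the quasi-simple regular module $S_\lambda$ has dimension vector $\delta$, so $\langle\theta,\delta\rangle=0$; and every non-zero proper submodule $U\subsetneq S_\lambda$ is preprojective — writing $U$ as a direct sum of indecomposables, none is preinjective because $\Hom$ from a preinjective to a regular module vanishes, and none is regular because $S_\lambda$ is regular-simple in a homogeneous tube and $\Hom$ between distinct tubes vanishes — whence $\langle\theta,[U]\rangle<0$ and $S_\lambda$ is $\theta$-stable; the $S_\lambda$ are pairwise non-isomorphic of the common dimension vector $\delta$. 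Finally, if $Q|_I$ is of affine type but not acyclic, then $\mathcal{J}(Q|_I,W|_I)$ is mutation-equivalent to a tame hereditary algebra and is itself a representation-infinite tame (indeed $E$-tame) algebra, and the desired infinite family is obtained either by transporting the homogeneous-tube family through a silting (derived) equivalence to the hereditary model or directly from the band modules of band-multiplicity one of the associated gentle or skew-gentle surface algebra, which form a $K^*$-parametrised family of bricks of constant dimension vector that are $\theta$-stable for a weight depending only on the band. The brick Brauer-Thrall II' conjecture then follows, since any $\theta$-stable module has division endomorphism ring and hence is a brick over the algebraically closed field $K$.

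The step I expect to be the main obstacle is the last case: the non-hereditary affine-type Jacobian algebras must be described concretely enough — as surface algebras, or via mutation from the hereditary model — to guarantee that a $\mathbb{P}^1$-family of bricks genuinely persists as a family of $\theta$-stable modules of one fixed dimension vector; equivalently, that the relevant submodule dimension vectors vary uniformly in the parameter. By contrast, the hereditary base cases and the transfer of $\theta$-stability along the surjection are routine; the remaining point that requires care is that Seven's theorem must be applied in a form guaranteeing that the affine-type sub-quiver it produces is truly of affine type, so that its Jacobian algebra is representation-infinite and tame, rather than, say, a $\tau$-tilting-finite cyclic Nakayama algebra.
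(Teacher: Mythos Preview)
Your overall architecture---Seven's theorem to locate a controlled full sub-quiver, then lifting $\theta$-stable families along the surjection $\mathcal{J}(Q,W)\twoheadrightarrow\mathcal{J}(Q|_I,W|_I)$---matches the paper's exactly, and your treatment of the hereditary base cases ($K_m$ and acyclic affine) is correct and in fact more elementary than the paper's, which routes even $K_m$ through a further surjection onto the Kronecker algebra.

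The genuine gap is precisely where you flag it: the non-hereditary affine-type Jacobian algebras. Your two suggested fixes are both problematic. Transporting the homogeneous-tube family through a derived/silting equivalence does not obviously preserve $\theta$-stability for a single fixed $\theta$, nor does it keep the dimension vector constant across the family without further argument. The band-module route would require you to identify every non-hereditary affine-type Jacobian algebra as gentle or skew-gentle and then verify uniform $\theta$-stability of a band family; this is plausible but is a substantial case analysis you have not carried out.

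The paper sidesteps this entirely. Rather than constructing explicit families, it invokes two abstract facts: (i) any Jacobian algebra of affine type is $E$-tame but not $E$-finite (this is Lemma~\ref{lem:affine E}, resting on \cite[Theorem~5.5]{HY25}), and (ii) Pfeifer's theorem (Theorem~\ref{thm:Pf}) says that for $E$-tame algebras, Demonet's conjecture implies the stable Brauer-Thrall II' conjecture. Combined with Theorem~\ref{thm:fin TFAE}, this yields Condition~\ref{cond:sBT} for every affine-type Jacobian algebra uniformly, with no case distinction between hereditary and non-hereditary. That is the missing ingredient in your argument.

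Your closing worry about Seven's theorem producing a cyclic Nakayama algebra is unfounded: oriented cycles are mutation equivalent to Dynkin quivers (type $A_3$ or $D_n$), so they are of Dynkin type, not affine type, and Lemma~\ref{lem:nDynaffsub} cannot output them.
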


\begin{remark}\label{rem: st imp brick conj}
Since $\theta$-stable modules are bricks, the stable Brauer-Thrall II' conjecture implies the brick Brauer-Thrall II' conjecture.
\end{remark}

Pfeifer proved the following fundamental relation among the conjectures.

\begin{theorem}[{\cite[Main Theorem~1]{Pf25}}]\label{thm:Pf}
If $\Lambda$ satisfies the stable Brauer-Thrall II' conjecture, then it satisfies Demonet’s conjecture. Moreover, if $\Lambda$ is $E$-tame, then the converse holds.
\end{theorem}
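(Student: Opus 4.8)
The plan is to prove the two implications of the statement separately, working throughout with the affine representation scheme $\rep_{\mathbf d}(\Lambda)$ of $\Lambda$-module structures on $K^{\mathbf d}$, equipped with the conjugation action of $\GL_{\mathbf d}=\prod_i\GL_{d_i}(K)$ whose orbits are the isomorphism classes of $\mathbf d$-dimensional modules, and translating freely between a module $M$, its minimal projective presentation $P^{-1}\xrightarrow{a_M}P^{0}$ viewed as a two-term complex, and its $g$-vector $g_M=[P^0]-[P^{-1}]\in K_0(\proj\Lambda)$.

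\textbf{From stable Brauer--Thrall II' to Demonet's conjecture.} By Proposition~\ref{prop:tau-E-fin} the implication ``$\tau$-tilting finite $\Rightarrow$ $E$-finite'' holds for every algebra, so it suffices to show that a $\tau$-tilting infinite $\Lambda$ is not $E$-finite. Assume $\Lambda$ is $\tau$-tilting infinite; by hypothesis there are $\mathbf d$ and $\theta$ with infinitely many isomorphism classes of $\theta$-stable modules of dimension $\mathbf d$. The $\theta$-stable locus $\rep_{\mathbf d}^{\theta\text{-st}}(\Lambda)\subseteq\rep_{\mathbf d}(\Lambda)$ is open and $\GL_{\mathbf d}$-stable, so one of its finitely many irreducible components $Z$ meets infinitely many orbits. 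Every $\theta$-stable module is a brick, so each such orbit has dimension exactly $\dim\GL_{\mathbf d}-1$; since an irreducible variety cannot contain infinitely many pairwise disjoint locally closed subsets of its own dimension, we get $\dim Z\ge\dim\GL_{\mathbf d}$. Hence for general $M\in Z$ one has $\dim_M\rep_{\mathbf d}(\Lambda)\ge\dim Z>\dim\mathcal O_M$, and Voigt's deformation estimate $T_M\rep_{\mathbf d}(\Lambda)/T_M\mathcal O_M\hookrightarrow\Ext^1_\Lambda(M,M)$ forces $\Ext^1_\Lambda(M,M)\neq0$ generically on $Z$. Shrinking $Z$ we may also take $g_M$ constant, equal to some $g$. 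I then claim $g$ is not rigid: if $g$ were the class of a two-term presilting complex, the associated ($\tau$-rigid) datum would be rigidly determined by $g$, so all modules with $g$-vector $g$ would have vanishing self-extensions, contradicting the previous sentence. Therefore $\Lambda$ is not $E$-finite, which is Demonet's conjecture for $\Lambda$.

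\textbf{From $E$-tameness and Demonet's conjecture to stable Brauer--Thrall II'.} Assume $\Lambda$ is $E$-tame and satisfies Demonet's conjecture, and let $\Lambda$ be $\tau$-tilting infinite. By Demonet, $\Lambda$ is not $E$-finite, so some $g\in K_0(\proj\Lambda)$ is not rigid; by $E$-tameness $e(g,g)=0$, so $g$ is tame but not rigid. Using the semicontinuity recorded in the footnote to the definition of $e$, a general pair $(a_1,a_2)\in\Hom_\Lambda(g)\times\Hom_\Lambda(g)$ satisfies $e(a_1,a_2)=0$, while a single general $a\in\Hom_\Lambda(g)$ satisfies $e(a,a)>0$ (otherwise $a$ would be a two-term presilting complex with $[a]=g$, forcing $g$ rigid). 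Set $M:=\Coker(a)$ for a general $a$, $\mathbf d:=[M]$, and take $\theta:=g$ as stability parameter. One must check two points. First, $M$ is $\theta$-semistable: the inequality $\langle g,[M']\rangle\le0$ for submodules $M'\subseteq M$ and the equality $\langle g,[M]\rangle=0$ follow from the two-term reformulation of King's criterion together with the generic vanishing $e(a_1,a_2)=0$, which is precisely the assertion that a tame, non-rigid $g$-vector lies on a wall of the wall-and-chamber structure of $\Lambda$. Second, the $\theta$-stable locus of dimension $\mathbf d$ is infinite: from $e(a,a)>0$ and the comparison of $e$ with $\dim\Hom_\Lambda(M,\tau M)$, hence with $\Ext^1_\Lambda(M,M)$, in \cite{As21,AsIy24}, one gets $\Ext^1_\Lambda(M,M)\neq0$, while $e(g,g)=0$ forces the relevant component of $\rep_{\mathbf d}(\Lambda)$ to be generically $\tau$-reduced of dimension $\dim\mathcal O_M+\dim\Hom_\Lambda(M,\tau M)>\dim\mathcal O_M$; thus $\{N:g_N=g\}$ is a positive-dimensional $\GL_{\mathbf d}$-stable family, a general member $N$ of which is a brick (its endomorphism ring is $K$ by the generic vanishing $e(a_1,a_2)=0$) and $\theta$-semistable, hence $\theta$-stable. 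So $\Lambda$ satisfies Condition~\ref{cond:sBT}.

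\textbf{Main obstacle.} The heart of the matter, and the step I expect to be hardest to make fully rigorous, is the precise dictionary between the two-term invariant $e(g,g)$ --- a \emph{minimum} over presentations, whose off-diagonal behaviour genuinely differs from its diagonal behaviour --- and the homological data $\Ext^1_\Lambda(M,M)$, $\Hom_\Lambda(M,\tau M)$, $\dim\mathcal O_M$ and $\dim_M\rep_{\mathbf d}(\Lambda)$ of the associated module, together with the semicontinuity bookkeeping needed to pass between ``a fixed presentation'', ``a general presentation in the family'', and ``the $g$-vector''. Both the non-rigidity of $g$ in the first implication and the $\theta$-semistability together with non-emptiness of the $\theta$-stable locus in the second rest on this dictionary; the wall-and-chamber (scattering-diagram) formulation of two-term silting theory is the natural setting in which to carry it out.
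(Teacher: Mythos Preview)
This theorem is not proved in the present paper; it is quoted from Pfeifer \cite{Pf25} without argument, so there is no proof here to compare your attempt against. I can only assess the proposal on its own terms.

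Your first implication contains a real gap. From the infinite family in $Z$ you extract $\Ext^1_\Lambda(M,M)\neq 0$ for generic $M\in Z$ and, after shrinking, a common $g$-vector $g=g_M$. But the sentence ``if $g$ were the class of a two-term presilting complex \dots\ all modules with $g$-vector $g$ would have vanishing self-extensions'' is false: rigidity of $g$ says only that the \emph{generic} $a\in\Hom_\Lambda(g)$ is presilting. Your component $Z$ lives in $\rep_{\mathbf d}(\Lambda)$, and its members may well all arise from a proper closed locus of $\Hom_\Lambda(g)$; indeed $\dim\Coker(a)$ is not determined by $g$, so the generic cokernel in $\Hom_\Lambda(g)$ need not even have dimension vector $\mathbf d$. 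Hence $\Ext^1\neq 0$ on $Z$ does not contradict rigidity of $g$. The clean fix is to discard $g$ entirely and show that $\theta$ itself is non-rigid: if $\theta=[U]$ with $U$ a $2$-term presilting complex, then $\theta$ lies in the relative interior of some cone of the $g$-vector fan, and there $\mathcal W_\theta^\Lambda$ is the Jasso $\tau$-perpendicular wide subcategory attached to $U$, equivalent to $\mod B$ for a finite-dimensional $B$ and therefore possessing only finitely many simple objects. By Lemma~\ref{lem:stable=simple} a rigid $\theta$ thus admits only finitely many $\theta$-stable modules, contradicting the hypothesis.

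For the converse under $E$-tameness your outline points in the right direction---a tame non-rigid $g$ lies on a wall, and one wants an infinite $g$-stable family out of the associated generically $\tau$-reduced component---but, as you concede in your final paragraph, the passage from ``$e(a_1,a_2)=0$ off the diagonal while $e(a,a)>0$ on it'' to an infinite family of $g$-\emph{stable} (not merely $g$-semistable) modules of a fixed dimension vector is precisely where the content lies, and it is not carried out here.
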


\begin{lemma}\label{lem:sBT E-tame}
If $\Lambda$ is $E$-tame but not $E$-finite, then it satisfies Condition~\ref{cond:sBT}.
\end{lemma}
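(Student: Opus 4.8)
The plan is to deduce this purely formally from Theorem~\ref{thm:Pf}, Proposition~\ref{prop:tau-E-fin}, and the logical shape of Demonet's conjecture. First I would unwind the terminology: for a fixed algebra, ``$\Lambda$ satisfies Demonet's conjecture'' means precisely that the biconditional ``$\Lambda$ is $\tau$-tilting finite $\iff$ $\Lambda$ is $E$-finite'' holds for this particular $\Lambda$, while ``$\Lambda$ satisfies the stable Brauer-Thrall II' conjecture'' means ``$\Lambda$ is $\tau$-tilting infinite $\implies$ $\Lambda$ satisfies Condition~\ref{cond:sBT}.'' With these readings the lemma becomes a short chain of implications.

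The first step is to determine the $\tau$-tilting type of $\Lambda$. By hypothesis $\Lambda$ is not $E$-finite; by the contrapositive of Proposition~\ref{prop:tau-E-fin}, an algebra that is not $E$-finite cannot be $\tau$-tilting finite, so $\Lambda$ is $\tau$-tilting infinite.

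Next I would verify that $\Lambda$ satisfies Demonet's conjecture. We have just shown that $\Lambda$ is $\tau$-tilting infinite, and it is not $E$-finite by assumption; hence both sides of the biconditional ``$\tau$-tilting finite $\iff$ $E$-finite'' are false for $\Lambda$, so the biconditional holds. Thus $\Lambda$ satisfies Demonet's conjecture.

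Finally, since $\Lambda$ is $E$-tame and satisfies Demonet's conjecture, the ``moreover'' part of Theorem~\ref{thm:Pf} yields that $\Lambda$ satisfies the stable Brauer-Thrall II' conjecture; applying this to our $\tau$-tilting infinite algebra $\Lambda$ gives Condition~\ref{cond:sBT}, as desired. The argument is entirely formal, so there is no real obstacle; the one point I would double-check is the invocation of Proposition~\ref{prop:tau-E-fin} to rule out $\tau$-tilting finiteness, since this is the only genuine input (as opposed to bookkeeping) and without it the relevant biconditional would not be established.
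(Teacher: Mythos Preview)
Your proof is correct and follows exactly the paper's argument: use Proposition~\ref{prop:tau-E-fin} to conclude $\Lambda$ is $\tau$-tilting infinite, observe that Demonet's conjecture then holds for $\Lambda$, and apply the $E$-tame converse in Theorem~\ref{thm:Pf} to get Condition~\ref{cond:sBT}. The only difference is that you spell out why the biconditional in Demonet's conjecture holds (both sides false), which the paper leaves implicit.
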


\begin{proof}
Since $\Lambda$ is not $E$-finite, it is not $\tau$-tilting finite by Proposition~\ref{prop:tau-E-fin}. Hence $\Lambda$ satisfies Demonet's conjecture. Since it is $E$-tame, it also satisfies the stable Brauer-Thrall II' conjecture by Theorem~\ref{thm:Pf}. Therefore, it satisfies Condition~\ref{cond:sBT}.
\end{proof}

In the rest of this section, we prove the following:

\begin{proposition}\label{prop:sBT pres}
Let $\Lambda\twoheadrightarrow\Lambda'$ be a surjective algebra homomorphism. If $\Lambda'$ satisfies Condition~\ref{cond:sBT}, then so does $\Lambda$.
\end{proposition}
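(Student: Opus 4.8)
The plan is to exploit the standard fact that a surjective algebra homomorphism $\pi\colon\Lambda\twoheadrightarrow\Lambda'$ lets us view $\mod\Lambda'$ as the full subcategory of $\mod\Lambda$ consisting of modules annihilated by $I:=\ker\pi$, and that this inclusion is exact, reflects monomorphisms, and preserves submodule lattices. So let $\mathbf d$ and $\theta'\in K_0(\proj\Lambda')$ witness Condition~\ref{cond:sBT} for $\Lambda'$: there are infinitely many isoclasses of $\theta'$-stable $\Lambda'$-modules of dimension vector $\mathbf d$. Regarding each such module $M$ as a $\Lambda$-module, I would produce $\theta\in K_0(\proj\Lambda)$ with $\langle\theta,-\rangle_\Lambda$ agreeing with $\langle\theta',-\rangle_{\Lambda'}$ on the classes of $\Lambda'$-modules; concretely, writing $K_0(\mod\Lambda')\hookrightarrow K_0(\mod\Lambda)$ via the simples of $\Lambda'$ (a subset of the simples of $\Lambda$, since $\Lambda'$ is basic and $I$ is contained in the radical after the identification of vertices), one takes $\theta$ to be the image of $\theta'$ under the dual inclusion $K_0(\proj\Lambda')\hookrightarrow K_0(\proj\Lambda)$. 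The pairing is computed on dimension vectors supported on the $\Lambda'$-simples, so $\langle\theta,[N]\rangle_\Lambda=\langle\theta',[N]\rangle_{\Lambda'}$ for every $N\in\mod\Lambda'$.

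The key step is then the comparison of stability. Because the submodules of $M$ as a $\Lambda$-module coincide with its submodules as a $\Lambda'$-module (both are $I$-annihilated), the defining inequalities $\langle\theta,[M']\rangle_\Lambda<0$ for all proper $\Lambda$-submodules $M'\subsetneq M$ are literally the inequalities $\langle\theta',[M']\rangle_{\Lambda'}<0$ for all proper $\Lambda'$-submodules, and likewise $\langle\theta,[M]\rangle_\Lambda=\langle\theta',[M]\rangle_{\Lambda'}=0$. Hence each $\theta'$-stable $\Lambda'$-module of dimension vector $\mathbf d$ is a $\theta$-stable $\Lambda$-module of the same dimension vector (the dimension vector in $K_0(\mod\Lambda)$ being the image of the one in $K_0(\mod\Lambda')$, so constant across the family). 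Since non-isomorphic $\Lambda'$-modules remain non-isomorphic as $\Lambda$-modules (the inclusion $\mod\Lambda'\hookrightarrow\mod\Lambda$ is fully faithful), this yields infinitely many isoclasses of $\theta$-stable $\Lambda$-modules of a fixed dimension vector, which is exactly Condition~\ref{cond:sBT} for $\Lambda$.

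I do not expect any serious obstacle; the only point requiring a little care is the bookkeeping identifying $K_0(\proj\Lambda')$ and $K_0(\mod\Lambda')$ with the appropriate coordinate subspaces of $K_0(\proj\Lambda)$ and $K_0(\mod\Lambda)$ so that the canonical pairing is respected — this is where the hypothesis that all algebras are basic (so that $\Lambda$ and $\Lambda'$ share a set of primitive idempotents, those for $\Lambda'$ being a sub-multiset) is used. Once that identification is fixed, the argument is essentially the observation that $\theta$-stability only sees the submodule lattice and the class in $K_0$, both of which are unchanged under restriction of scalars along a surjection.
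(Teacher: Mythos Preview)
Your argument is correct and in fact more direct than the paper's. The paper first isolates a separate proposition (that a $\theta\otimes_\Lambda\Lambda'$-stable $\Lambda'$-module is $\theta$-stable over $\Lambda$) and proves it via the characterization ``$\theta$-stable $=$ simple object of $\mathcal{W}_\theta$'' together with the identity $\mathcal{W}^{\Lambda'}_{\theta'}=\mathcal{W}^{\Lambda}_\theta\cap\mod\Lambda'$; it then finishes with a pigeonhole on total $K$-dimension to pin down a single $\Lambda$-dimension vector. You bypass both detours: you compare stability directly through the submodule lattice (unchanged under restriction along a surjection), and you observe that the whole family already shares one $\Lambda$-dimension vector, namely the image of $\mathbf d$ under $K_0(\mod\Lambda')\hookrightarrow K_0(\mod\Lambda)$. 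Two small points of bookkeeping: the natural map on projective Grothendieck groups goes the other way, $-\otimes_\Lambda\Lambda'\colon K_0(\proj\Lambda)\twoheadrightarrow K_0(\proj\Lambda')$, and what you call the ``dual inclusion'' is really a choice of section to this surjection---any preimage of $\theta'$ works, so this is harmless. Also, your parenthetical that $I\subseteq\rad\Lambda$ is neither true for an arbitrary surjection nor needed: the fact that simple $\Lambda'$-modules are simple $\Lambda$-modules already follows from the equality of submodule lattices, which you invoke anyway.
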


First, the following lemma is standard (see \cite[Proof of Proposition~3.1]{Ki94}).

\begin{lemma}[{\cite{Ki94}}]\label{lem:stable=simple}
Let $\theta\in K_0(\proj\Lambda)$. Then a module $M\in\mod\Lambda$ is $\theta$-stable if and only if it is a simple object of $\mathcal{W}_{\theta}^{\Lambda}$.
\end{lemma}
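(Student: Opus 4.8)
The plan is to first endow $\mathcal{W}_\theta^\Lambda$ with the structure of an abelian subcategory of $\mod\Lambda$, so that the phrase ``simple object'' is meaningful, and then to identify the subobjects of an object of $\mathcal{W}_\theta^\Lambda$ with precisely the submodules occurring in the definition of $\theta$-stability. Everything will rest on a single observation: since $\langle\theta,-\rangle$ is a linear form on $K_0(\mod\Lambda)$, the assignment $M\mapsto\langle\theta,[M]\rangle$ is additive on short exact sequences.

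First I would verify that $\mathcal{W}_\theta^\Lambda$ is closed under kernels, images, and cokernels formed in $\mod\Lambda$. Given a morphism $f\colon M\to N$ between $\theta$-semistable modules, I set $K=\Ker f$, $I=\Im f$, and $C=\Coker f$. Since $K\subseteq M$ and $I\subseteq N$, semistability yields $\langle\theta,[K]\rangle\le 0$ and $\langle\theta,[I]\rangle\le 0$; additivity on $0\to K\to M\to I\to 0$ together with $\langle\theta,[M]\rangle=0$ then forces $\langle\theta,[K]\rangle=\langle\theta,[I]\rangle=0$, whence $\langle\theta,[C]\rangle=0$ from $0\to I\to N\to C\to 0$. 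Because submodules of $K$ (resp.\ of $I$) are submodules of $M$ (resp.\ of $N$), and submodules of $C$ lift to submodules of $N$ containing $I$, semistability of $M$ and $N$ supplies the inequalities $\langle\theta,[-]\rangle\le 0$ needed to conclude that $K$, $I$, and $C$ are again $\theta$-semistable. This makes $\mathcal{W}_\theta^\Lambda$ an abelian subcategory whose exact structure is inherited from $\mod\Lambda$.

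Next I would record the description of subobjects: for $M\in\mathcal{W}_\theta^\Lambda$, a submodule $M'\subseteq M$ is a subobject in $\mathcal{W}_\theta^\Lambda$ if and only if $\langle\theta,[M']\rangle=0$, since every submodule of $M'$ is already a submodule of $M$ and hence automatically pairs non-positively with $\theta$, so that the semistability of $M'$ reduces exactly to $\langle\theta,[M']\rangle=0$. The equivalence then follows by unwinding definitions. If $M$ is $\theta$-stable, it is in particular $\theta$-semistable, and any proper nonzero subobject $M'\subsetneq M$ would satisfy both $\langle\theta,[M']\rangle=0$ and, by stability, $\langle\theta,[M']\rangle<0$, a contradiction; hence $M$ is simple. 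Conversely, if $M$ is simple in $\mathcal{W}_\theta^\Lambda$, then $\langle\theta,[M]\rangle=0$, and for each proper nonzero submodule $M'\subsetneq M$ semistability gives $\langle\theta,[M']\rangle\le 0$ while simplicity excludes equality, forcing $\langle\theta,[M']\rangle<0$; thus $M$ is $\theta$-stable.

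The argument is essentially formal, so I do not expect a serious obstacle; the only place where additivity does real work is the closure step, and that is where I would concentrate the write-up. The one subtlety to flag is the reading of ``proper submodule'' in the definition of $\theta$-stable: the zero submodule must be excluded, since otherwise the strict inequality $\langle\theta,[M']\rangle<0$ fails trivially at $M'=0$. I would therefore make explicit that ``proper submodule'' is to be understood as $0\neq M'\subsetneq M$, which is exactly what matches ``proper nonzero subobject'' on the simple-object side and keeps the two conditions in bijection.
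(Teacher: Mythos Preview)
Your proof is correct and is precisely the standard argument; the paper itself does not supply a proof of this lemma but simply cites King's original paper, so there is nothing further to compare. Your remark about reading ``proper submodule'' as $0\neq M'\subsetneq M$ is also well-taken, since with the paper's definition read literally the zero submodule would violate the strict inequality and no nonzero module would be $\theta$-stable.
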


A surjective algebra homomorphism $\varphi:\Lambda\twoheadrightarrow\Lambda'$ induces a fully faithful functor $\mod\Lambda'\rightarrow\mod\Lambda$, which can be viewed as an inclusion functor. This is because one can consider any $\Lambda'$-module as a $\Lambda$-module, whose module structure is obtained from $\varphi$.

\begin{lemma}[{\cite[Example 3.23]{AsIy24}}]
For a surjective algebra homomorphism $\Lambda\twoheadrightarrow\Lambda'$, the induced group homomorphism
\begin{equation}\label{eq:quotient Grothen}
-\otimes_{_{\Lambda}}\Lambda':K_0(\proj\Lambda)\longrightarrow K_0(\proj\Lambda')
\end{equation}
is surjective. Moreover, for each $\theta\in K_{0}(\proj\Lambda)$, we have
\begin{equation}\label{eq:quotient ss}
\mathcal{W}^{\Lambda'}_{\theta\otimes_{\Lambda}\Lambda'}=\mathcal{W}^{\Lambda}_{\theta}\cap\mod\Lambda'.
\end{equation}
\end{lemma}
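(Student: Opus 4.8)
The plan is to deduce both assertions from the adjunction between extension of scalars $-\otimes_\Lambda\Lambda'$ and restriction of scalars, and then to read off the statements from how this adjunction interacts with the canonical pairings. Write $\varphi\colon\Lambda\twoheadrightarrow\Lambda'$ for the surjection, so that each $\Lambda'$-module becomes a $\Lambda$-module through $\varphi$; this restriction of scalars is precisely the fully faithful inclusion $\mod\Lambda'\hookrightarrow\mod\Lambda$ recalled before the statement. Extension of scalars $-\otimes_\Lambda\Lambda'\colon\proj\Lambda\to\proj\Lambda'$ is left adjoint to this inclusion, so for $P\in\proj\Lambda$ and $M\in\mod\Lambda'$ there is a natural isomorphism $\Hom_{\Lambda'}(P\otimes_\Lambda\Lambda',M)\cong\Hom_\Lambda(P,M)$. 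Taking $K$-dimensions gives the key pairing identity
\[
\langle\,\theta\otimes_\Lambda\Lambda',\,[M]\,\rangle=\langle\,\theta,\,[M]\,\rangle
\qquad(\theta\in K_0(\proj\Lambda),\ M\in\mod\Lambda'),
\]
where on the left the pairing is over $\Lambda'$ and on the right $M$ is viewed as a $\Lambda$-module. Equivalently, the homomorphism \eqref{eq:quotient Grothen} is the transpose, relative to the two canonical perfect pairings, of the map $K_0(\mod\Lambda')\to K_0(\mod\Lambda)$ induced by the inclusion.

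For the surjectivity in \eqref{eq:quotient Grothen} I would argue by duality. Since $\varphi$ is surjective, every simple $\Lambda'$-module is again simple as a $\Lambda$-module, and non-isomorphic simple $\Lambda'$-modules stay non-isomorphic over $\Lambda$; hence the induced map $K_0(\mod\Lambda')\to K_0(\mod\Lambda)$ carries the basis of simple classes injectively into the basis of simple classes, and so is a split monomorphism of free abelian groups. Its transpose with respect to the perfect pairings is exactly \eqref{eq:quotient Grothen} by the identity above, and the transpose of a split monomorphism is a split epimorphism, which is the desired surjectivity.

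For the equality \eqref{eq:quotient ss} I would fix a $\Lambda'$-module $M$ and compare the two semistability conditions directly. The decisive observation is that, because the $\Lambda$-action on $M$ factors through $\varphi$, a subgroup of $M$ is a $\Lambda$-submodule if and only if it is a $\Lambda'$-submodule; thus $M$ has one and the same submodule lattice over $\Lambda$ and over $\Lambda'$, and each submodule is itself a $\Lambda'$-module. Applying the pairing identity to $M$ and to each submodule $M'$, the defining (in)equalities $\langle\theta\otimes_\Lambda\Lambda',[M]\rangle=0$ and $\langle\theta\otimes_\Lambda\Lambda',[M']\rangle\le 0$ match term by term their counterparts over $\Lambda$. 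Hence $M$ lies in $\mathcal{W}^{\Lambda'}_{\theta\otimes_\Lambda\Lambda'}$ if and only if it lies in $\mathcal{W}^{\Lambda}_{\theta}\cap\mod\Lambda'$, which is \eqref{eq:quotient ss}.

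The routine ingredients are the naturality of the tensor--restriction adjunction and the identification of the pairing with $\dim_K\Hom$. The one step deserving care is the transpose argument for surjectivity: I must check that the inclusion on simple classes is genuinely split and that \eqref{eq:quotient Grothen} is the honest adjoint of that inclusion, rather than merely agreeing with it on generators. Beyond this I anticipate no real obstacle, since once the pairing identity is established both assertions become formal consequences of elementary $\mathbb{Z}$-linear duality together with the submodule-lattice identity.
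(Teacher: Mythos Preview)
Your argument is correct. Note, however, that the paper does not supply its own proof of this lemma: it is quoted verbatim from \cite[Example~3.23]{AsIy24} and used as a black box, so there is no in-paper proof to compare against. Your approach via the tensor--restriction adjunction, the resulting pairing identity, and $\mathbb{Z}$-linear duality is the standard way to establish the result and would serve perfectly well as a self-contained justification; the only point you flag as needing care---that \eqref{eq:quotient Grothen} is the genuine transpose of the inclusion on Grothendieck groups of modules---is indeed settled by your pairing identity, since that identity holds for every $\theta$ and every $M\in\mod\Lambda'$ by bilinearity.
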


\begin{proposition}\label{prop:quotient stable}
Let $\Lambda\twoheadrightarrow\Lambda'$ be a surjective algebra homomorphism, and $\theta\in K_{0}(\proj\Lambda)$. Then a $\theta\otimes_{\Lambda}\Lambda'$-stable $\Lambda'$-module is also a $\theta$-stable $\Lambda$-module.
\end{proposition}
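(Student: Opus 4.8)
The plan is to reduce the statement to the characterization of $\theta$-stable modules as the simple objects of the semistable subcategory (Lemma~\ref{lem:stable=simple}), combined with the identification \eqref{eq:quotient ss} of $\mathcal{W}^{\Lambda'}_{\theta\otimes_\Lambda\Lambda'}$ with $\mathcal{W}^\Lambda_\theta\cap\mod\Lambda'$. Write $\varphi:\Lambda\twoheadrightarrow\Lambda'$ for the given surjection, and view $\mod\Lambda'$ as a full subcategory of $\mod\Lambda$ via $\varphi$, so that a $\Lambda'$-module is a $\Lambda$-module on which $\ker\varphi$ acts trivially.

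First I would take a $\theta\otimes_\Lambda\Lambda'$-stable $\Lambda'$-module $M$. By Lemma~\ref{lem:stable=simple} applied to $\Lambda'$, $M$ is a simple object of $\mathcal{W}^{\Lambda'}_{\theta\otimes_\Lambda\Lambda'}$; in particular it is $\theta\otimes_\Lambda\Lambda'$-semistable, so by \eqref{eq:quotient ss} the $\Lambda$-module $M$ lies in $\mathcal{W}^\Lambda_\theta$. It then remains to check that $M$ is simple as an object of $\mathcal{W}^\Lambda_\theta$, since Lemma~\ref{lem:stable=simple} applied to $\Lambda$ will then give that $M$ is $\theta$-stable. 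Because $\mathcal{W}^\Lambda_\theta$ is a wide subcategory of $\mod\Lambda$, a nonzero subobject of $M$ in $\mathcal{W}^\Lambda_\theta$ is nothing but a nonzero $\Lambda$-submodule $M'\subseteq M$ that happens to lie in $\mathcal{W}^\Lambda_\theta$. Now the key observation is that every $\Lambda$-submodule $M'$ of $M$ is automatically a $\Lambda'$-module: since $\ker\varphi$ annihilates $M$, it annihilates $M'$, and the inclusion $M'\hookrightarrow M$ is $\Lambda'$-linear for the induced structures. Hence $M'\in\mathcal{W}^\Lambda_\theta\cap\mod\Lambda'=\mathcal{W}^{\Lambda'}_{\theta\otimes_\Lambda\Lambda'}$, so $M'$ is a nonzero subobject of $M$ in $\mathcal{W}^{\Lambda'}_{\theta\otimes_\Lambda\Lambda'}$; simplicity of $M$ there forces $M'=M$. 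Thus $M$ has no proper nonzero subobject in $\mathcal{W}^\Lambda_\theta$, which finishes the argument.

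I do not expect a real obstacle here: the proof is essentially formal once Lemma~\ref{lem:stable=simple} and \eqref{eq:quotient ss} are in place. The only points needing (minor) care are that passing to the wide subcategory $\mathcal{W}^\Lambda_\theta$ does not create new subobjects of $M$ — i.e.\ subobjects there are genuine $\Lambda$-submodules lying in $\mathcal{W}^\Lambda_\theta$ — and the elementary fact that a $\Lambda$-submodule of a module killed by $\ker\varphi$ is again a $\Lambda'$-module. This last fact is precisely what transports stability of $M$ over $\Lambda'$ to stability over $\Lambda$, and it is the only place where the surjectivity of $\varphi$ is used beyond what is already encoded in \eqref{eq:quotient ss}.
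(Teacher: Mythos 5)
Your proof is correct and follows essentially the same route as the paper's: both rest on Lemma~\ref{lem:stable=simple}, the identification \eqref{eq:quotient ss}, and the key observation that any $\Lambda$-submodule of a $\Lambda'$-module is again a $\Lambda'$-module, hence lands back in $\mathcal{W}^{\Lambda'}_{\theta\otimes_\Lambda\Lambda'}$ where simplicity of $M$ applies. The only cosmetic difference is that the paper picks a simple subobject of $M$ in $\mathcal{W}^{\Lambda}_{\theta}$ and shows it equals $M$, whereas you argue directly that every nonzero subobject equals $M$; your variant is, if anything, marginally cleaner since it avoids invoking the existence of a simple subobject.
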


\begin{proof}
Let $M$ be a $\theta\otimes_{\Lambda}\Lambda'$-stable $\Lambda'$-module. Then $M\in\mathcal{W}_{\theta}^{\Lambda}$ by \eqref{eq:quotient ss}. Consider a $\Lambda$-submodule $M'$ of $M$ that is a simple object of $\mathcal{W}_{\theta}^{\Lambda}$. Then by Lemma~\ref{lem:stable=simple}, $M'$ is a $\theta$-stable $\Lambda$-module. Since $M$ is a $\Lambda'$-module, any $\Lambda$-submodule of $M$, and in particular $M'$, is also annihilated by $\ker(\Lambda\twoheadrightarrow\Lambda')$. Thus $M'\in\mod\Lambda'$, and hence $M'\in\mathcal{W}^{\Lambda'}_{\theta\otimes_{\Lambda}\Lambda'}$ by \eqref{eq:quotient ss}. However, by Lemma~\ref{lem:stable=simple}, $M$ is a simple object of $\mathcal{W}^{\Lambda'}_{\theta\otimes_{\Lambda}\Lambda'}$, and this implies that $M=M'$. Therefore, $M$ is a $\theta$-stable $\Lambda$-module.
\end{proof}

\begin{proof}[Proof of Proposition \ref{prop:sBT pres}]
Since the group homomorphism \eqref{eq:quotient Grothen} is surjective, there exist a dimension vector $\mathbf{d}'\in K_0(\mod\Lambda')$ and $\theta\in K_0(\proj\Lambda)$ such that there are infinitely many isomorphism classes of $\theta\otimes_{\Lambda}\Lambda'$-stable $\Lambda'$-modules of dimension vector $\mathbf{d}'$. 

By Proposition~\ref{prop:quotient stable}, all of them are $\theta$-stable $\Lambda$-modules with the same dimension. Since the set of possible dimension vectors is finite, we conclude that there exists a dimension vector $\mathbf{d}$ for which there are infinitely many isomorphism classes of $\theta$-stable $\Lambda$-modules of dimension vector $\mathbf{d}$. Therefore, $\Lambda$ satisfies Condition~\ref{cond:sBT}.
\end{proof}

\section{Jacobian algebras}\label{sec:Jac}

\subsection{Quiver mutations}\label{sec:mutation}

Let $Q$ be a quiver without loops or $2$-cycles (i.e., oriented cycles of length two). The \emph{mutation} of $Q$ at vertex $k$ is the quiver $\mu_k(Q)$ obtained from $Q$ by the following steps:
\begin{itemize}
\item[$(1)$] For each path $i\leftarrow k\leftarrow j$, add an arrow $i\leftarrow j$.
\item[$(2)$] Reverse all arrows incident to $k$.
\item[$(3)$] Remove a maximal set of disjoint $2$-cycles.
\end{itemize}
Note that $\mu_k$ is an involution, that is, $\mu_k(\mu_k(Q))=Q$.

\begin{definition}
The quiver $Q$ is called
\begin{itemize}
\item \emph{mutation equivalent} to a quiver $Q'$ if it can be obtained from $Q'$ by a finite sequence of mutations;
\item \emph{of Dynkin} (resp.\ \emph{affine}) \emph{type} if it is mutation equivalent to a Dynkin (resp.\ affine\footnote{The $2$-Kronecker quiver is regarded as of type $A^{(1)}_1$.}) quiver (see Figures~\ref{fig:Dynkin} and~\ref{fig:affine});
\item \emph{mutation-acyclic} if it is mutation equivalent to an acyclic quiver.
\end{itemize}
\end{definition}

\begin{figure}[htp]
\centering
\begin{tikzcd}[cramped,sep=tiny]
{A_{n\ge 1}:} & \bullet & \bullet & \bullet & \cdots & \bullet & \bullet \\
& \bullet \\
{D_{n\ge 4}:} && \bullet & \bullet & \cdots & \bullet & \bullet \\
& \bullet \\
{E_{n=6,7,8}:} & \bullet & \bullet & \bullet & \bullet & \cdots & \bullet \\
&&& \bullet
\arrow[no head, from=1-2, to=1-3]
\arrow[no head, from=1-3, to=1-4]
\arrow[no head, from=1-4, to=1-5]
\arrow[no head, from=1-5, to=1-6]
\arrow[no head, from=1-6, to=1-7]
\arrow[no head, from=2-2, to=3-3]
\arrow[no head, from=3-3, to=3-4]
\arrow[no head, from=3-4, to=3-5]
\arrow[no head, from=3-5, to=3-6]
\arrow[no head, from=3-6, to=3-7]
\arrow[no head, from=4-2, to=3-3]
\arrow[no head, from=5-2, to=5-3]
\arrow[no head, from=5-3, to=5-4]
\arrow[no head, from=5-4, to=5-5]
\arrow[no head, from=5-4, to=6-4]
\arrow[no head, from=5-5, to=5-6]
\arrow[no head, from=5-6, to=5-7]
\end{tikzcd}
\caption{Underlying diagrams of Dynkin quivers with $n$ vertices}
\label{fig:Dynkin}
\end{figure}
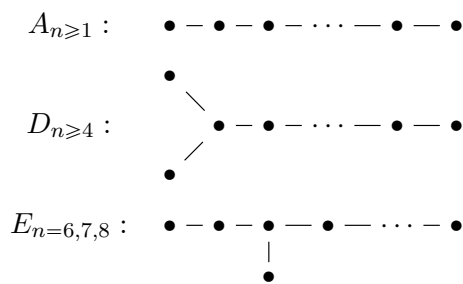

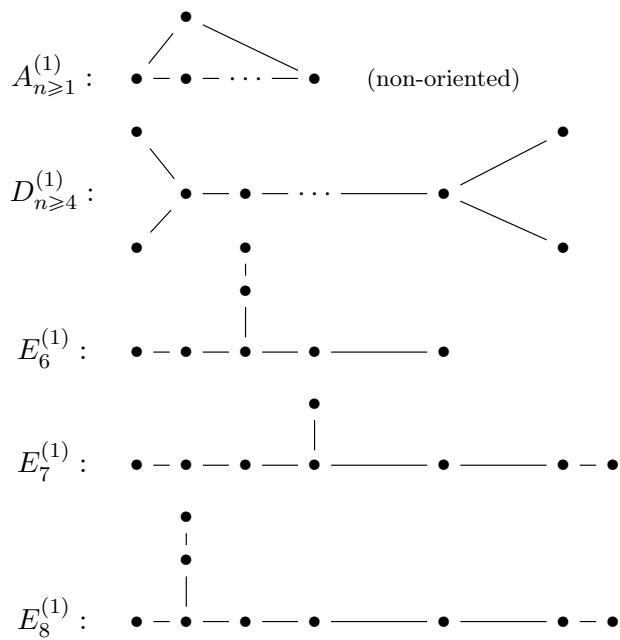
\begin{figure}[htp]
\centering
\begin{tikzcd}[cramped,sep=tiny]
&& \bullet \\
{A^{(1)}_{n\ge 1}:} & \bullet & \bullet & \cdots & \bullet & {\text{{\footnotesize (non-oriented)}}} \\
& \bullet &&&&& \bullet \\
{D^{(1)}_{n\ge 4}:} && \bullet & \bullet & \cdots & \bullet \\
& \bullet && \bullet &&& \bullet \\
&&& \bullet \\
{E^{(1)}_6:} & \bullet & \bullet & \bullet & \bullet & \bullet \\
&&&& \bullet \\
{E_7^{(1)}:} & \bullet & \bullet & \bullet & \bullet & \bullet & \bullet & \bullet \\
&& \bullet \\
&& \bullet \\
{E^{(1)}_8:} & \bullet & \bullet & \bullet & \bullet & \bullet & \bullet & \bullet
\arrow[no head, from=1-3, to=2-2]
\arrow[no head, from=1-3, to=2-5]
\arrow[no head, from=2-2, to=2-3]
\arrow[no head, from=2-3, to=2-4]
\arrow[no head, from=2-4, to=2-5]
\arrow[no head, from=3-2, to=4-3]
\arrow[no head, from=4-3, to=4-4]
\arrow[no head, from=4-4, to=4-5]
\arrow[no head, from=4-5, to=4-6]
\arrow[no head, from=4-6, to=3-7]
\arrow[no head, from=4-6, to=5-7]
\arrow[no head, from=5-2, to=4-3]
\arrow[no head, from=5-4, to=6-4]
\arrow[no head, from=6-4, to=7-4]
\arrow[no head, from=7-3, to=7-2]
\arrow[no head, from=7-4, to=7-3]
\arrow[no head, from=7-4, to=7-5]
\arrow[no head, from=7-5, to=7-6]
\arrow[no head, from=8-5, to=9-5]
\arrow[no head, from=9-2, to=9-3]
\arrow[no head, from=9-3, to=9-4]
\arrow[no head, from=9-4, to=9-5]
\arrow[no head, from=9-5, to=9-6]
\arrow[no head, from=9-6, to=9-7]
\arrow[no head, from=9-7, to=9-8]
\arrow[no head, from=10-3, to=11-3]
\arrow[no head, from=11-3, to=12-3]
\arrow[no head, from=12-2, to=12-3]
\arrow[no head, from=12-3, to=12-4]
\arrow[no head, from=12-4, to=12-5]
\arrow[no head, from=12-5, to=12-6]
\arrow[no head, from=12-6, to=12-7]
\arrow[no head, from=12-7, to=12-8]
\end{tikzcd}
\caption{Underlying diagrams of affine quivers with $n+1$ vertices}
\label{fig:affine}
\end{figure}

\begin{lemma}\label{lem:nDynaffsub}
If a connected quiver without loops or $2$-cycles is not of Dynkin type, then it contains a quiver of affine type or $K_m$ with $m\ge 3$ as a full sub-quiver.
\end{lemma}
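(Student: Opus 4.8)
The plan is to invoke Seven's structural result on quivers that are not of Dynkin type, but since the statement as phrased is about \emph{full sub-quivers} rather than mutation-equivalence, the first task is to reconcile the two. Recall that ``of Dynkin type'' and ``of affine type'' are defined via mutation-equivalence; but the generalized Kronecker quivers $K_m$ with $m\ge 3$ are mutation-invariant (mutation at either of the two vertices, after deleting $2$-cycles, returns $K_m$), so no distinction arises there. The key input from Seven's paper \cite{Se07} is the classification of minimal non-Dynkin quivers: a connected quiver without loops or $2$-cycles all of whose proper full sub-quivers are of Dynkin type is itself either of affine type or $K_m$ with $m\ge 3$.

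Granting that, here is how I would run the argument. Let $Q$ be connected, without loops or $2$-cycles, and not of Dynkin type. Among all full sub-quivers of $Q$ that are not of Dynkin type, choose one, call it $Q'$, with the minimal number of vertices; this is possible since $Q$ itself is such a sub-quiver and the vertex set is finite. First I would observe that $Q'$ is connected: if $Q'$ decomposed as a disjoint union $Q'_1\sqcup Q'_2$, then since mutation respects connected components, $Q'$ is of Dynkin type if and only if each $Q'_i$ is, so at least one $Q'_i$ is not of Dynkin type, contradicting minimality of $|Q'|$. Next, every \emph{proper} full sub-quiver of $Q'$ is, by minimality, of Dynkin type. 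Hence $Q'$ is a connected minimal non-Dynkin quiver in Seven's sense, so by his classification $Q'$ is of affine type or is $K_m$ with $m\ge 3$. Since $Q'$ is a full sub-quiver of $Q$, this is exactly the claim.

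There is one subtlety I would need to nail down: the precise formulation in \cite{Se07}. Seven works with diagrams/matrices up to mutation and the notion of a ``minimal'' object is with respect to taking principal sub-matrices, which corresponds exactly to passing to full sub-quivers. I would cite the relevant theorem there (the classification of minimal $2$-infinite diagrams, or minimal mutation-infinite diagrams) and note that its list consists precisely of the affine diagrams and the $K_m$ for $m\ge 3$. A minor additional point: one must check that ``not of Dynkin type'' for our $Q'$ matches Seven's hypothesis — a connected quiver fails to be of Dynkin type exactly when its exchange matrix is not mutation-equivalent to a matrix of finite (Dynkin) type, which is the condition under which Seven's minimality analysis applies.

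The main obstacle, then, is not the mutation/sub-quiver argument itself, which is a routine minimal-counterexample extraction, but rather correctly matching our terminology to the statement in \cite{Se07}: ensuring that ``minimal non-Dynkin'' in our sense (all proper full sub-quivers Dynkin) coincides with the minimality hypothesis in Seven's classification, and that his list is exactly $\{$affine quivers$\}\cup\{K_m : m\ge 3\}$ with no omitted or extra cases. Once that dictionary is in place, the proof is three short observations: a minimal non-Dynkin full sub-quiver exists, it is connected, and it appears on Seven's list.
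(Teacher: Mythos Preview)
Your proposal is correct and follows essentially the same approach as the paper: both rely on Seven's classification in \cite{Se07}, with the paper simply citing \cite[Theorem~3.2 and (2.2)]{Se07} directly, while you make the implicit minimal-counterexample extraction explicit. Your added detail (choosing a minimal non-Dynkin full sub-quiver, checking connectedness, and matching terminology) is exactly the standard way to unpack that citation and introduces no genuinely different ideas.
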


\begin{proof}
The assertion follows from \cite[Theorem~3.2 and (2.2)]{Se07}.
\end{proof}

\subsection{Quivers with potentials and Jacobian algebras}\label{sec:QP}

Recall the notions of quivers with potentials and Jacobian algebras \cite{DWZ08}. Let $Q$ be a quiver. For each $m\in\mathbb{Z}_{\ge 0}$, we denote by $Q_{m}$ the set of all paths of length $m$. Consider the $K$-vector space $KQ_m$ freely generated by $Q_m$. Then the \emph{complete path algebra} associated with $Q$ is defined by
\[
K\doublelrangle{Q}:=\prod_{m=0}^{\infty}KQ_m,
\]
where multiplication is induced by the concatenation of paths. Let $\mathfrak{m}$ be the ideal of $K\doublelrangle{Q}$ generated by all arrows. For a subset $U\subset K\doublelrangle{Q}$, the \emph{$\mathfrak{m}$-adic closure} of $U$ is denoted by
\[
\overline{U}:=\bigcap_{l=0}^{\infty}U+\mathfrak{m}^{l}.
\]

Assume that $Q$ has no loops. A \emph{potential} $W$ on $Q$ is a (possibly infinite) linear combination of oriented cycles in $Q$ modulo the closure of the subspace $U$ generated by the commutators in $[K\doublelrangle{Q},K\doublelrangle{Q}]$ and trivial paths $e_i$, for any $i\in Q_0$. Indeed, $W$ is an element of 
\[
\Pot(Q):=K\doublelrangle{Q}/\overline{U}.
\]
The pair $(Q, W)$ is called a \emph{quiver with potential} or just a \emph{QP}, for short. For an arrow $\alpha$ of $Q$, we define a \emph{cyclic derivative} $\partial_{\alpha}:\Pot(Q)\rightarrow  K\doublelrangle{Q}$ as the unique continuous linear map that sends any cycle $c$ to $\sum\limits_{c=u\alpha v}vu$.

\begin{definition}
The \emph{Jacobian algebra} associated with the QP $(Q, W)$ is defined by
\[
\mathcal{J}(Q,W):=K\doublelrangle{Q}/\overline{J(W)},
\]
where $J(W):=\langle \partial_{\alpha}W\mid\alpha\in Q_1\rangle$ is an ideal of $K\doublelrangle{Q}$.
\end{definition}

Let $(Q, W)$ and $(Q', W')$ be two QPs with $Q_0=Q'_0$. Their \emph{direct sum} is a QP $(Q'',W'')$ defined by $Q''_0=Q_0=Q'_0$, $Q''_1=Q_1\sqcup Q'_1$, and $W''=W+W'$. Moreover, we say that they are \emph{right equivalent} if there is an algebra isomorphism $\varphi:K\doublelrangle{Q}\rightarrow K\doublelrangle{Q'}$ whose restriction on vertices is the identity map and $\varphi(W)=W'$.

We recall two special classes of QPs: A QP $(Q,W)$ is called \emph{trivial} if $W$ is a linear combination of $2$-cycles and $\mathcal{J}(Q,W)$ is isomorphic to $KQ_{0}$. It is called \emph{reduced} if $W$ has no terms that are cycles of length at most $2$.

\begin{theorem}[{\cite[Theorem~4.6]{DWZ08}}]
Let $(Q, W)$ be a QP. Up to right equivalence, uniquely, we can write it as a direct sum of a trivial QP $(Q_{\text{tri}}, W_{\text{tri}})$ and a reduced QP $(Q_{\text{red}}, W_{\text{red}})$, called the \emph{reduced part} of $(Q, W)$. Moreover, $\mathcal{J}(Q,W)\cong\mathcal{J}(Q_{\text{red}},W_{\text{red}})$.
\end{theorem}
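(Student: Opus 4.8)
The plan is to follow Derksen--Weyman--Zelevinsky, treating separately the construction of the splitting right-equivalence, its uniqueness, and the statement on Jacobian algebras. Decompose $W = W^{(2)} + W^{(\ge 3)}$, where $W^{(2)}$ is the component supported on $2$-cycles and $W^{(\ge 3)}$ collects the terms of length at least $3$. The first step is linear algebra: for each unordered pair of distinct vertices $\{i,j\}$, the form $W^{(2)}$ induces a bilinear pairing between the span of the arrows $i\to j$ and the span of the arrows $j\to i$; replacing these arrows by bases putting each such pairing in rank normal form is a linear change of arrows, hence a right-equivalence, after which $W^{(2)}=\sum_{k=1}^{r}a_kb_k$ for pairwise distinct arrows $a_1,b_1,\dots,a_r,b_r$, and no other arrow occurs in $W^{(2)}$. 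Let $Q_{\mathrm{tri}}$ be the sub-quiver on $Q_0$ with arrow set $\{a_k,b_k\}$, put $W_{\mathrm{tri}}=\sum_k a_kb_k$, and let $R = Q_1\setminus\{a_k,b_k\}$.

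Next I would produce a right-equivalence transforming $(Q,W)$ into $(Q_{\mathrm{tri}},W_{\mathrm{tri}})\oplus(Q_{\mathrm{red}},W_{\mathrm{red}})$, where $Q_{\mathrm{red}}$ has arrow set $R$ and $W_{\mathrm{red}}$ is a reduced potential in the arrows $R$. Since $\partial_{a_k}W = b_k + (\text{terms of }\mathfrak{m}\text{-adic order}\ge 2)$ and $\partial_{b_k}W = a_k + (\text{order}\ge 2)$, one can use unitriangular automorphisms of $K\doublelrangle{Q}$ --- fixing vertices and sending each arrow to itself plus a correction of strictly higher order --- to eliminate, degree by degree, every occurrence of $a_k, b_k$ beyond the quadratic terms $a_kb_k$. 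The key technical point is an $\mathfrak{m}$-adic estimate: such a substitution replaces the lowest-degree offending part of $W$ by terms of strictly higher order while not disturbing the degrees already handled. As $K\doublelrangle{Q}$ is $\mathfrak{m}$-adically complete, the resulting infinite composition converges to an automorphism $\varphi$ fixing $Q_0$ with $\varphi(W)$ cyclically equivalent to $W_{\mathrm{tri}}+W_{\mathrm{red}}$. I expect this convergence argument, with its bookkeeping of $\mathfrak{m}$-adic orders and the precise choice of the corrective automorphisms, to be the main obstacle.

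For uniqueness, suppose $(Q,W)$ is right-equivalent to two decompositions of this form. The composite right-equivalence intertwines them; since every right-equivalence is a filtered automorphism, its degree-$2$ part is a linear isomorphism, so the rank $2r$ of $W^{(2)}$ --- equivalently the number of arrows of $Q_{\mathrm{tri}}$ --- is an invariant, and the two trivial parts agree up to relabeling. A further successive-approximation argument shows the intertwining automorphism may be modified, without altering its associated graded, so as to respect the direct-sum decomposition; restricting to the reduced summands then yields a right-equivalence between them.

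Finally, right-equivalent QPs have isomorphic Jacobian algebras: if $\varphi(W)$ is cyclically equivalent to $W'$ then $\varphi$ carries $\overline{J(W)}$ onto $\overline{J(W')}$, via the noncommutative chain rule relating the cyclic derivatives of $W$ and $\varphi(W)$. It thus remains to compute $\mathcal{J}$ of the direct sum. Because $W_{\mathrm{tri}}$ involves only $a_k,b_k$ and $W_{\mathrm{red}}$ only the arrows in $R$, we get $\partial_{a_k}(W_{\mathrm{tri}}+W_{\mathrm{red}}) = b_k$, $\partial_{b_k}(W_{\mathrm{tri}}+W_{\mathrm{red}}) = a_k$, and $\partial_\rho(W_{\mathrm{tri}}+W_{\mathrm{red}}) = \partial_\rho W_{\mathrm{red}}$ for $\rho\in R$. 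Hence the Jacobian ideal is generated by all the $a_k, b_k$ together with $\{\partial_\rho W_{\mathrm{red}}\}_{\rho\in R}$; quotienting by the $a_k,b_k$ collapses $K\doublelrangle{Q}$ onto $K\doublelrangle{Q_{\mathrm{red}}}$, and then quotienting by the remaining relations yields $\mathcal{J}(Q_{\mathrm{red}},W_{\mathrm{red}})$, as claimed.
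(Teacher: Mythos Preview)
The paper does not give its own proof of this theorem: it is quoted verbatim from \cite[Theorem~4.6]{DWZ08} as background in Section~\ref{sec:QP}, with no argument supplied. There is therefore nothing in the paper to compare against beyond the citation itself.

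Your sketch is essentially the Derksen--Weyman--Zelevinsky proof. The normalization of $W^{(2)}$ by a linear change of arrows, the iterated unitriangular substitutions $a_k\mapsto a_k+(\text{higher order})$ converging in the $\mathfrak{m}$-adic topology (their Lemma~4.7), the rank invariance of $W^{(2)}$ for uniqueness of the trivial part, and the computation of the Jacobian ideal of the direct sum are exactly the steps in \cite{DWZ08}. One small point: the uniqueness of the reduced part up to right equivalence is the more delicate half of their argument (Propositions~4.9--4.10 in \cite{DWZ08}); your phrase ``a further successive-approximation argument'' is correct in spirit but hides the real work, namely constructing a right-equivalence between the two reduced QPs from the given right-equivalence of the full direct sums, which again requires an $\mathfrak{m}$-adic limit. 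If you were writing this out in full, that is where most of the remaining effort would go.
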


Let $(Q, W)$ be a QP. Assume that $k\in Q_0$ lies on no $2$-cycles. Moreover, we may assume that no oriented cycle appearing in the expansion of $W$ starts or ends at $k$. The \emph{QP-premutation} of $(Q,W)$ at $k$ is the QP $\tilde{\mu}_k(Q,W):=(\tilde{Q},\tilde{W})$ obtained from $(Q,W)$ by the following steps (cf. Section \ref{sec:mutation}):
\begin{itemize}
\item[$(1)$] For each
$(b,a)\in Q_{2,k}:=
\{(b,a)\in Q_1\times Q_1\mid\begin{tikzcd}[cramped]
i & k & j
\arrow["b", from=1-2, to=1-1,swap]
\arrow["a", from=1-3, to=1-2,swap]
\end{tikzcd}\}$,
add an arrow 
$\begin{tikzcd}[cramped]
i & j
\arrow["{[ba]}", from=1-2, to=1-1,swap]
\end{tikzcd}$.
\item[$(2)$] Replace each arrow
$\begin{tikzcd}[cramped]
k & i
\arrow["{a}", from=1-2, to=1-1,swap]
\end{tikzcd}$
with
$\begin{tikzcd}[cramped]
k & i
\arrow["{a^{\ast}}", from=1-1, to=1-2]
\end{tikzcd}$, and each arrow
$\begin{tikzcd}[cramped]
i & k
\arrow["{a}", from=1-2, to=1-1,swap]
\end{tikzcd}$
with
$\begin{tikzcd}[cramped]
i & k
\arrow["{a^{\ast}}", from=1-1, to=1-2]
\end{tikzcd}$.
\item[$(3)$] Define $\tilde{W}$ by \[\tilde{W}:=[W]+\sum_{(b,a)\in Q_{2,k}} [ba]a^{\ast}b^{\ast}.\] Here, $[W]$ is obtained from $W$ by replacing each occurrence of $ba$, where $(b,a)\in Q_{2,k}$, with the arrow $[ba]$.
\end{itemize}
Up to right equivalence, the reduced part of $\tilde{\mu}_k(Q, W)$ is called the \emph{QP-mutation} of $(Q, W)$ at $k$, which is denoted by $\mu_k(Q, W)$.

\begin{definition}
We say that $W$ or $(Q, W)$ is
\begin{itemize}
\item \emph{non-degenerate} if every quiver obtained from $(Q, W)$ by a finite sequence of mutations has no $2$-cycles;
\item \emph{Jacobi-finite} if the associated Jacobian algebra $\mathcal{J}(Q,W)$ is finite-dimensional.
\end{itemize}
\end{definition}

Note that for a non-degenerate QP $(Q,W)$, the quiver of $\mu_k(Q,W)$ is equal to $\mu_k(Q)$ for $k\in Q_0$. If $Q$ is a quiver without loops or $2$-cycles and $K$ is uncountable, then $Q$ admits a non-degenerate potential \cite[Corollary~7.4]{DWZ08}.

We now recall a fundamental finiteness criterion for finite-dimensional Jacobian algebras associated with non-degenerate QPs, which will be used in the next section.

\begin{theorem}[{\cite[Theorem~5.3]{HY25}}]\label{thm:fin TFAE}
For a connected non-degenerate Jacobi-finite QP $(Q,W)$, the Jacobian algebra $\mathcal{J}(Q,W)$ is $\tau$-tilting finite if and only if it is $E$-finite, and this holds precisely when $Q$ is of Dynkin type. In particular, $\mathcal{J}(Q,W)$ satisfies Demonet's conjecture.
\end{theorem}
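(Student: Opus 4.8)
The plan is to derive all three assertions from Proposition~\ref{prop:tau-E-fin}, which already gives ``$\tau$-tilting finite $\Rightarrow E$-finite'' for an arbitrary finite-dimensional algebra. It therefore suffices to prove: \emph{(A)} if $Q$ is of Dynkin type then $\mathcal{J}(Q,W)$ is $\tau$-tilting finite; and \emph{(B)} if $Q$ is not of Dynkin type then $\mathcal{J}(Q,W)$ is not $E$-finite. Granting (A) and (B): by Proposition~\ref{prop:tau-E-fin}, $Q$ Dynkin forces $\mathcal{J}(Q,W)$ to be both $\tau$-tilting finite and $E$-finite, while $Q$ non-Dynkin forces it (via (B) and the contrapositive of Proposition~\ref{prop:tau-E-fin}) to be neither; hence each of the two finiteness conditions is equivalent to $Q$ being of Dynkin type, and in particular to the other, which is Demonet's conjecture for $\mathcal{J}(Q,W)$.

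For (A), I would use that a non-degenerate Jacobi-finite QP whose quiver is mutation equivalent to a Dynkin quiver $Q_0$ has Jacobian algebra isomorphic to a cluster-tilted algebra of type $Q_0$. Concretely: over a Dynkin mutation class the non-degenerate potential is rigid, so $(Q,W)$ is right equivalent to an iterated QP-mutation of $(Q_0,0)$; and QP-mutation is compatible with mutation of cluster-tilting objects in the $\Hom$-finite, $2$-Calabi--Yau cluster category $\mathcal{C}_{Q_0}$. Together these identify $\mathcal{J}(Q,W)$ with $\End_{\mathcal{C}_{Q_0}}(T)^{\mathrm{op}}$ for a cluster-tilting object $T$, which is representation-finite since $\mathcal{C}_{Q_0}$ has only finitely many indecomposables (Buan--Marsh--Reiten); representation-finite algebras are $\tau$-tilting finite by \cite{DIJ19}.

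For (B), the first step is Lemma~\ref{lem:nDynaffsub}: a connected quiver without loops or $2$-cycles that is not of Dynkin type has a full sub-quiver $Q|_I$ which is either of affine type or equal to $K_m$ with $m\ge3$, and the induced QP gives the surjection $\mathcal{J}(Q,W)\twoheadrightarrow\mathcal{J}(Q|_I,W|_I)$. The second step is that non-$E$-finiteness ascends along any surjective algebra homomorphism $\Lambda\twoheadrightarrow\Lambda'$ with kernel $I$: if $g=[X]$ is rigid over $\Lambda$ with $X$ a $2$-term presilting complex, then $X\otimes_\Lambda\Lambda'$ is a $2$-term complex in $K^b(\proj\Lambda')$ whose class is the image of $g$ under the surjection $K_0(\proj\Lambda)\to K_0(\proj\Lambda')$; applying the adjunction of $-\otimes_\Lambda\Lambda'$ with restriction to the triangle $X\otimes_\Lambda I\to X\to X\otimes_\Lambda\Lambda'\to(X\otimes_\Lambda I)[1]$ shows that $\Hom_{K^b(\proj\Lambda')}(X\otimes_\Lambda\Lambda',(X\otimes_\Lambda\Lambda')[1])$ sits between $\Hom_{D(\Lambda)}(X,X[1])=0$ and $\Hom_{D(\Lambda)}(X,(X\otimes_\Lambda I)[2])$, the latter vanishing because $X$ is $2$-term while $(X\otimes_\Lambda I)[2]$ is concentrated in cohomological degrees $-3,-2$; hence the image of $g$ is rigid over $\Lambda'$, so a non-rigid class over $\Lambda'$ pulls back to a non-rigid class over $\Lambda$. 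It thus remains to prove that $\mathcal{J}(Q|_I,W|_I)$ is not $E$-finite: when $Q|_I=K_m$ with $m\ge3$ one has $W|_I=0$ and $\mathcal{J}(Q|_I,W|_I)=KK_m$, a connected wild hereditary algebra, and when $Q|_I$ is of affine type one obtains a tame hereditary algebra or a cluster-tilted algebra of affine type.

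The main obstacle is this last point. To show such an algebra $\Lambda_0$ is not $E$-finite I would exhibit a class $g$ with $e(g,g)>0$, which is then not tame and hence not rigid. When $\Lambda_0$ is tame or wild hereditary this is clean: take $g$ to be the class of the minimal projective presentation of a module $N$ of dimension vector $\mathbf{d}$ with $q(\mathbf{d})\le0$ (the null root $\delta$ for affine type, or an imaginary root such as $(1,1)$ for $K_m$), so that $e(g,g)=\dim_K\Hom_{\Lambda_0}(M,\tau M)$ for a general module $M$ of dimension vector $\mathbf{d}$; since $\dim\Ext^1(M,M)=\dim\End(M)-q(\mathbf{d})\ge1$, the Auslander--Reiten formula forces $\Hom_{\Lambda_0}(M,\tau M)\neq0$. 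When $\Lambda_0$ is a cluster-tilted algebra of affine type one must instead use the $2$-Calabi--Yau structure of the ambient cluster category together with the explicit shape of the module category (in particular the presence of homogeneous tubes, on whose mouth a brick $N$ satisfies $\tau N\cong N$ and $\Ext^1(N,N)\neq0$) to locate a non-rigid class; pinning down $\mathcal{J}(Q|_I,W|_I)$ precisely, including that restrictions of non-degenerate potentials behave well enough, is part of this step. Carrying out this analysis for the finitely many minimal algebras produced by Lemma~\ref{lem:nDynaffsub} is where the representation-theoretic content lies; once this is done, the reductions above assemble the statement for an arbitrary non-degenerate Jacobi-finite $(Q,W)$.
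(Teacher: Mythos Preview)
The paper does not give a proof of this statement: Theorem~\ref{thm:fin TFAE} is quoted from \cite[Theorem~5.3]{HY25} and used as a black box, so there is no in-paper argument to compare your proposal against.

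Your outline is nonetheless a reasonable route to the result, and it is worth noting that it recycles the very reduction machinery (Lemma~\ref{lem:nDynaffsub} and Proposition~\ref{prop:restrict}) that the present paper deploys to prove its \emph{main} theorem. Part~(A) is standard and correct. In part~(B), your descent argument for $E$-finiteness along a surjection $\Lambda\twoheadrightarrow\Lambda'$ is fine: since $X$ is a bounded complex of projectives, $X\otimes_\Lambda I$ really is concentrated in degrees $-1,0$, and the vanishing of $\Hom_{D(\Lambda)}(X,(X\otimes_\Lambda I)[2])$ follows for the degree reason you give. The hereditary base cases ($K_m$ and genuinely acyclic affine quivers) are also handled by your Euler-form argument, because over a hereditary algebra the $g$-vector of a module depends only on its dimension vector, so the generic $e$-value on that $g$ is controlled by $q(\mathbf d)$.

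The real gap is the one you flag yourself: when $Q|_I$ is of affine type but not itself acyclic, $\mathcal{J}(Q|_I,W|_I)$ is a (non-hereditary) cluster-tilted algebra of affine type, and you must still produce a non-rigid $g$. Pointing to a brick $N$ at the mouth of a homogeneous tube with $\tau N\cong N$ shows $\Hom(N,\tau N)\neq0$, but that only tells you the specific presentation of $N$ is not presilting; to conclude that its class $g$ is non-rigid you need that \emph{no} $2$-term complex with class $g$ is presilting. In \cite{HY25} this is handled via the $E$-tame/$g$-fan machinery rather than by a direct module-theoretic computation, and your sketch does not supply an alternative. Until that base case is closed, the proposal is an outline rather than a proof.
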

Finally, we recall the notion of restriction of a QP, which plays an important role in the proof of our main theorem. Let $(Q,W)$ be a QP, and $I$ be a subset of $Q_{0}$. Then the \emph{restriction} of $(Q,W)$ to $I$ is denoted by $(Q|_I,W|_I)$ and defined as follows:
\begin{itemize}
\item $Q|_I$ is the full sub-quiver of $Q$ with $(Q|_I)_0=I$.
\item $W|_I$ is obtained from $W$ by deleting summands of $W$ that are not cycles in $Q|_I$.
\end{itemize}

\begin{proposition}\label{prop:restrict}
Let $(Q,W)$ be a QP and $I\subset Q_0$.
\begin{itemize}
\item[$(1)$]\cite[Proposition~8.9]{DWZ08} The restriction of $(Q,W)$ to $I$ induces a surjective algebra homomorphism $\mathcal{J}(Q,W)\twoheadrightarrow\mathcal{J}(Q|_I,W|_I)$.
\item[$(2)$]\cite[Corollary~22]{LF09} If $(Q,W)$ is non-degenerate, then so is $(Q|_I,W|_I)$.
\end{itemize}
\end{proposition}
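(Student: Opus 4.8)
The plan is to handle the two parts separately, in each case producing the needed map or reduction by hand; since both facts are classical I would ultimately invoke \cite[Proposition~8.9]{DWZ08} and \cite[Corollary~22]{LF09}, but here is how the arguments go.

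For part~$(1)$, I would start from the evident continuous $K$-algebra homomorphism $\pi\colon K\doublelrangle{Q}\twoheadrightarrow K\doublelrangle{Q|_I}$ that fixes every arrow and every trivial path lying in $Q|_I$ and sends all other arrows and trivial paths to $0$; concretely, $\pi$ kills exactly those paths that pass through a vertex of $Q_0\setminus I$. The whole content is then to show $\pi(\overline{J(W)})\subseteq\overline{J(W|_I)}$, after which $\pi$ descends to a surjection $\mathcal{J}(Q,W)\twoheadrightarrow\mathcal{J}(Q|_I,W|_I)$, surjectivity being inherited from $\pi$. To check the inclusion I would write $W=W|_I+W'$, where $W'$ collects the cyclic terms of $W$ passing through at least one vertex of $Q_0\setminus I$. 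For an arrow $\alpha$ of $Q|_I$, every summand of $\partial_\alpha W'$ is a cyclic rotation of such a term with one occurrence of $\alpha$ removed, hence a path still meeting $Q_0\setminus I$, so $\pi$ kills it and $\pi(\partial_\alpha W)=\partial_\alpha(W|_I)$. For an arrow $\alpha$ not in $Q|_I$ --- which then has an endpoint in $Q_0\setminus I$ --- one has $\partial_\alpha W=\partial_\alpha W'$ and every summand is a path starting or ending outside $I$, again killed by $\pi$. Thus $\pi$ maps the generators of $J(W)$ into $J(W|_I)$, and continuity of $\pi$ for the $\mathfrak{m}$-adic topologies promotes this to the closures.

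For part~$(2)$, the plan is to exploit compatibility of restriction with QP-mutation. The key input --- and the step I expect to be the real obstacle --- is that for $k\in I$ the QP-mutation $\mu_k(Q|_I,W|_I)$ is right equivalent to the restriction $\bigl(\mu_k(Q,W)\bigr)|_I$. Establishing this requires tracking, through the restriction functor, the new arrows $[ba]$, the reversed arrows $a^{\ast},b^{\ast}$ and the extra term $\sum_{(b,a)\in Q_{2,k}}[ba]a^{\ast}b^{\ast}$ of the premutation $\tilde\mu_k$, and in particular reconciling the non-canonical right equivalence used to pass from $\tilde\mu_k$ to its reduced part: the $2$-cycles cancelled during that reduction need not respect $I$, although those meeting $Q_0\setminus I$ are killed by restriction in any case. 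This bookkeeping is exactly the content of \cite[Corollary~22]{LF09}.

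Granting the compatibility lemma, I would finish by induction on the length of a mutation sequence. Every mutation sequence applicable to $(Q|_I,W|_I)$ uses only vertices of $I$ (the others are not present), so applying the same sequence to the non-degenerate QP $(Q,W)$ and restricting to $I$ reproduces, up to right equivalence, each successive QP-mutation of $(Q|_I,W|_I)$; the admissibility needed to mutate at each $k\in I$ holds because $(Q,W)$, being non-degenerate, and hence every quiver occurring in its mutation sequence, has no $2$-cycles. Since a full sub-quiver of a quiver without $2$-cycles again has none, no $2$-cycle ever appears in a mutation sequence of $(Q|_I,W|_I)$; that is, $(Q|_I,W|_I)$ is non-degenerate.
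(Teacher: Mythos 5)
The paper gives no argument for this proposition beyond the two citations to \cite[Proposition~8.9]{DWZ08} and \cite[Corollary~22]{LF09}, and your proposal ultimately rests on the same references, so it takes essentially the same route. Your additional sketches are correct: the computation in part~$(1)$ that $\pi(\partial_\alpha W)=\partial_\alpha(W|_I)$ for $\alpha$ in $Q|_I$ and $\pi(\partial_\alpha W)=0$ otherwise, together with $\mathfrak{m}$-adic continuity, is a sound elaboration of the cited result, and the induction on mutation sequences in part~$(2)$, using that a full sub-quiver of a $2$-acyclic quiver is $2$-acyclic, correctly reduces non-degeneracy to the compatibility of restriction with mutation that you (rightly) attribute to \cite{LF09}.
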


\section{The stable Brauer-Thrall II' conjecture for Jacobian algebras}\label{sec:proof}

In this section, we prove the following theorem, which is the main result of the paper.

\begin{theorem}\label{thm:conj}
Let $(Q,W)$ be a non-degenerate quiver with potential. If $\mathcal{J}(Q,W)$ is finite-dimensional, then it satisfies the stable Brauer-Thrall II' conjecture. In particular, it satisfies the brick Brauer-Thrall II' conjecture.
\end{theorem}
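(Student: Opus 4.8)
The plan is to reduce the general case to a small list of concrete quivers via the structural results already assembled. First, by the usual reductions, I may assume $(Q,W)$ is connected (the stable Brauer--Thrall II$'$ condition for a direct product follows from the condition on any factor, since $\theta$-stable modules for a factor pull back to $\theta$-stable modules for the product via Proposition~\ref{prop:sBT pres}). If $\mathcal{J}(Q,W)$ is $\tau$-tilting finite there is nothing to prove, so assume it is $\tau$-tilting infinite. By Theorem~\ref{thm:fin TFAE}, this forces $Q$ to \emph{not} be of Dynkin type. Now apply Lemma~\ref{lem:nDynaffsub}: $Q$ contains, as a full sub-quiver, either a quiver $Q|_I$ of affine type or a generalized Kronecker quiver $K_m$ with $m\ge 3$. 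By Proposition~\ref{prop:restrict}(1) there is a surjection $\mathcal{J}(Q,W)\twoheadrightarrow\mathcal{J}(Q|_I,W|_I)$, and by Proposition~\ref{prop:restrict}(2) the restricted QP $(Q|_I,W|_I)$ is again non-degenerate; it is Jacobi-finite as a quotient of the finite-dimensional algebra $\mathcal{J}(Q,W)$. By Proposition~\ref{prop:sBT pres}, it therefore suffices to prove that $\mathcal{J}(Q|_I,W|_I)$ satisfies Condition~\ref{cond:sBT}.

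So the problem is reduced to two families of concrete non-degenerate Jacobi-finite QPs: those whose quiver is of affine type, and those whose quiver is $K_m$ with $m\ge 3$. For $K_m$ with $m\ge 3$ the only potential is the zero potential (no oriented cycles), so $\mathcal{J}(K_m,0)=KK_m$ is the path algebra of the $m$-Kronecker quiver, and Condition~\ref{cond:sBT} is classical: already for $K_2$ one has the one-parameter family of $\theta$-stable representations of dimension vector $(1,1)$ for suitable $\theta$, and for $m\ge 3$ the moduli spaces of $\theta$-stable representations of a fixed dimension vector are positive-dimensional, giving infinitely many isomorphism classes. For the affine case, the cleanest route is to invoke Lemma~\ref{lem:sBT E-tame}: it is enough to show that each such $\mathcal{J}(Q|_I,W|_I)$ is $E$-tame and not $E$-finite. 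Not $E$-finite is immediate from Theorem~\ref{thm:fin TFAE} (the quiver is of affine type, hence not Dynkin, hence not $\tau$-tilting finite, hence not $E$-finite). The remaining point is $E$-tameness of finite-dimensional Jacobian algebras of non-degenerate QPs whose quiver is of affine type.

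The main obstacle is therefore establishing $E$-tameness in the affine case. The natural strategy is: affine-type quivers are, by definition, mutation equivalent to an acyclic affine (extended Dynkin) quiver; non-degenerate QP-mutation induces derived equivalences of the Jacobian algebras (Keller--Yang), and the $E$-invariant is invariant under such mutation, so $E$-tameness is a mutation-invariant property of the QP. Hence it suffices to treat the case where $Q$ itself is acyclic of extended Dynkin type, in which case $W=0$ and $\mathcal{J}(Q,0)=KQ$ is a tame hereditary algebra; tame algebras are $E$-tame (this is part of the general picture relating $g$-tameness/$E$-tameness to representation-tameness, and is available in the literature for hereditary algebras, indeed more generally for the brick-infinite tame case). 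I would cite the appropriate statement — $E$-tameness is preserved under QP-mutation for non-degenerate QPs, and tame algebras are $E$-tame — rather than reprove it. Once $E$-tameness is in hand, Lemma~\ref{lem:sBT E-tame} finishes the affine case, Proposition~\ref{prop:sBT pres} lifts Condition~\ref{cond:sBT} back up to $\mathcal{J}(Q,W)$, and Remark~\ref{rem: st imp brick conj} gives the brick version as a corollary. $\qed$
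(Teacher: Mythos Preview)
Your proposal is correct and follows essentially the same route as the paper: reduce to the connected case, use Theorem~\ref{thm:fin TFAE} to pass from $\tau$-tilting infinite to ``$Q$ not of Dynkin type'', restrict to an affine or $K_m$ full sub-quiver via Lemma~\ref{lem:nDynaffsub} and Proposition~\ref{prop:restrict}, verify Condition~\ref{cond:sBT} there, and lift back by Proposition~\ref{prop:sBT pres}. The only cosmetic differences are that the paper handles $K_m$ ($m\ge 3$) by further surjecting onto $\mathcal{J}(A_1^{(1)},0)=KK_2$ and invoking the affine case, rather than appealing directly to moduli of Kronecker representations, and that the paper packages the $E$-tameness of the affine case into Lemma~\ref{lem:affine E} (citing \cite[Theorem~5.5]{HY25}), whereas you sketch the underlying argument via mutation invariance of $E$-tameness and tameness of affine hereditary algebras.
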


To prove Theorem~\ref{thm:conj}, we establish two lemmas.

\begin{lemma}\label{lem:affine E}
Let $(Q, W)$ be a non-degenerate QP. If $Q$ is mutation-acyclic, then $\mathcal{J}(Q,W)$ is finite-dimensional. In particular, if $Q$ is of affine type, then $\mathcal{J}(Q,W)$ is $E$-tame but not $E$-finite.
\end{lemma}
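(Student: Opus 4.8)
The plan is to prove the two assertions separately, using known structural facts about non-degenerate QPs together with the $E$-invariant machinery recalled in Section~\ref{sec:E} and the mutation invariance of the relevant properties.

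For the first assertion, the key point is that finite-dimensionality of $\mathcal{J}(Q,W)$ is a mutation invariant for non-degenerate QPs. More precisely, Jacobi-finiteness is preserved under QP-mutation (this is part of the Derksen--Weyman--Zelevinsky theory: mutation of QPs restricts to an operation on Jacobi-finite QPs), so it suffices to verify the claim when $Q$ itself is acyclic. But if $Q$ is acyclic and has no loops or $2$-cycles, then any potential $W$ on $Q$ is the zero potential (there are no oriented cycles at all), hence $\mathcal{J}(Q,0)=KQ$ is the path algebra of a finite acyclic quiver, which is finite-dimensional. First I would state the mutation-invariance of Jacobi-finiteness, cite the relevant place in \cite{DWZ08} (or a standard reference), and then conclude by transport along the sequence of mutations realizing the mutation-equivalence of $Q$ with an acyclic quiver.

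For the second assertion, suppose $Q$ is of affine type. Then $Q$ is in particular mutation-acyclic (affine quivers are mutation equivalent to acyclic affine quivers), so by the first part $\mathcal{J}(Q,W)$ is finite-dimensional. Since $Q$ is connected (being of affine type) and not of Dynkin type, Theorem~\ref{thm:fin TFAE} applies and gives that $\mathcal{J}(Q,W)$ is $\tau$-tilting infinite, hence not $E$-finite by Theorem~\ref{thm:fin TFAE} again (the equivalence of $\tau$-tilting finiteness and $E$-finiteness there forces $E$-infiniteness). It remains to show $E$-tameness. Here the approach is to invoke the known fact that for a non-degenerate QP whose quiver is of tame (i.e.\ affine) type, the Jacobian algebra is tame as a finite-dimensional algebra — this goes back to work on cluster-tilted and related algebras of affine type — and then appeal to the general principle (as recorded in \cite{AsIy24}, see \cite[Figure~1]{AsIy24}) that a tame algebra is $E$-tame. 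Alternatively, one can argue directly: by Geiss--Labardini-Fragoso--Schröer type results, Jacobian algebras of affine type are tame, so every element of $K_0(\proj\mathcal{J}(Q,W))$ is tame.

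The main obstacle I anticipate is pinning down the precise citation and the cleanest route for the $E$-tameness of $\mathcal{J}(Q,W)$ in the affine case: one must either quote a representation-tameness result for affine-type Jacobian algebras and then use $\text{tame}\Rightarrow E\text{-tame}$, or quote $E$-tameness directly from a result in the $\tau$-tilting/silting literature. The finite-dimensionality part and the non-$E$-finiteness part are routine given Theorem~\ref{thm:fin TFAE}; the real content is that affineness of $Q$ upgrades ``$\tau$-tilting infinite'' to the stronger ``$E$-tame but not $E$-finite,'' which (via Lemma~\ref{lem:sBT E-tame}) will be exactly what feeds the proof of Theorem~\ref{thm:conj}.
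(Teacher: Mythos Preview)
Your proof of the first assertion is essentially the paper's: reduce via mutation-invariance of Jacobi-finiteness (the paper cites \cite[Corollary~6.6]{DWZ08}) to the acyclic case, where the potential is forced to be zero and $\mathcal{J}(Q,0)=KQ$ is finite-dimensional. Likewise, your argument that affine type forces non-$E$-finiteness via Theorem~\ref{thm:fin TFAE} matches the paper exactly.

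The one genuine divergence is in the $E$-tameness step. You propose the two-stage route ``affine-type Jacobian algebras are representation-tame'' $\Rightarrow$ ``tame implies $E$-tame,'' invoking work on cluster-tilted algebras of affine type together with \cite{AsIy24}. This is a valid strategy, but it implicitly uses that a non-degenerate potential on a mutation-acyclic quiver is unique up to right equivalence (so that $\mathcal{J}(Q,W)$ really is the cluster-tilted algebra for which tameness is known); you should make that rigidity step explicit if you go this way. The paper instead bypasses all of this and simply cites \cite[Theorem~5.5]{HY25}, which gives $E$-tameness for affine-type Jacobian algebras directly. Your route is more self-contained from a classical representation-type perspective, but requires assembling several references; the paper's route is a one-line citation to its companion paper.
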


\begin{proof}
If $Q'$ is an acyclic quiver, then any non-degenerate potential on $Q'$ is zero and $\mathcal{J}(Q',0)$ is just a finite-dimensional path algebra $KQ'$. Since the finite-dimensionality of Jacobian algebras is preserved under QP-mutation \cite[Corollary~6.6]{DWZ08}, $\mathcal{J}(Q,W)$ is also finite-dimensional. The remaining assertions follow immediately from Theorem~\ref{thm:fin TFAE} and \cite[Theorem~5.5]{HY25}.
\end{proof}

\begin{lemma}\label{lem:sBT}
Let $(Q, W)$ be a non-degenerate Jacobi-finite QP. If $Q$ is not of Dynkin type, then $\mathcal{J}(Q,W)$ satisfies Condition~\ref{cond:sBT}.
\end{lemma}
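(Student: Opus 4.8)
The plan is to reduce to a full sub-quiver on which the situation is understood, and then lift stability back along the restriction map. Concretely, suppose $(Q,W)$ is a non-degenerate Jacobi-finite QP with $Q$ not of Dynkin type. We may assume $Q$ is connected, since if $Q$ decomposes as a disjoint union then every cycle of $W$ lies in a single component, $\mathcal{J}(Q,W)$ is the product of the Jacobian algebras of the components, and one of the components must itself be non-Dynkin (otherwise $Q$ would be mutation equivalent to a disjoint union of Dynkin quivers, hence $\tau$-tilting finite by Theorem~\ref{thm:fin TFAE}); restricting to that component is a surjection, so by Proposition~\ref{prop:sBT pres} it suffices to treat the connected case.

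Assuming $Q$ connected and non-Dynkin, Lemma~\ref{lem:nDynaffsub} gives a full sub-quiver $Q|_I$ that is either of affine type or $K_m$ with $m\ge 3$. First I would rule out or handle the $K_m$ case: since $m\ge 3$, the quiver $K_m$ is wild, and $\mathcal{J}(K_m,0)=KK_m$ is the path algebra of the generalized Kronecker quiver — but one must be careful that the induced potential $W|_I$ need not be zero. However, $W|_I$ is a potential on $K_m$, which has no oriented cycles at all, so $W|_I=0$ necessarily, and $\mathcal{J}(K_m,W|_I)=KK_m$. This path algebra is well known to satisfy Condition~\ref{cond:sBT} (it has families of $\theta$-stable representations of the same dimension vector; indeed it is $\tau$-tilting infinite and $E$-tame as a tame-or-wild hereditary algebra — one can cite the hereditary case of Pfeifer's theorem or argue directly with the canonical decomposition), so Proposition~\ref{prop:sBT pres} applied to $\mathcal{J}(Q,W)\twoheadrightarrow\mathcal{J}(Q|_I,W|_I)=KK_m$ (using Proposition~\ref{prop:restrict}(1)) finishes this case.

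In the affine case, $Q|_I$ is of affine type and, by Proposition~\ref{prop:restrict}(2), $(Q|_I,W|_I)$ is again non-degenerate; by Lemma~\ref{lem:affine E} its Jacobian algebra $\mathcal{J}(Q|_I,W|_I)$ is finite-dimensional, $E$-tame, and not $E$-finite. Then Lemma~\ref{lem:sBT E-tame} shows $\mathcal{J}(Q|_I,W|_I)$ satisfies Condition~\ref{cond:sBT}, and once more Proposition~\ref{prop:restrict}(1) together with Proposition~\ref{prop:sBT pres} lifts this to $\mathcal{J}(Q,W)$. Combining the two cases yields the lemma.

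The step I expect to be the main obstacle — or at least the one requiring the most care — is the $K_m$ case with $m\ge 3$: one needs a clean justification that $\mathcal{J}(K_m,W|_I)=KK_m$ satisfies Condition~\ref{cond:sBT}. The cleanest route is to observe $KK_m$ is hereditary, hence $E$-tame (every $g\in K_0(\proj)$ is tame for a hereditary algebra since $\mathrm{Ext}^2$ vanishes), and $\tau$-tilting infinite because $K_m$ is non-Dynkin (Theorem~\ref{thm:fin TFAE} with $W=0$, or directly); then Lemma~\ref{lem:sBT E-tame} applies verbatim. A secondary point to get right is the connectedness reduction at the very start, making sure the non-Dynkin component is correctly identified via the mutation-equivalence characterization of Dynkin type.
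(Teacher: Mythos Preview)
Your overall strategy matches the paper's: restrict to an affine or generalized Kronecker full sub-quiver via Lemma~\ref{lem:nDynaffsub}, verify Condition~\ref{cond:sBT} for the restricted Jacobian algebra, and lift back along the surjection of Proposition~\ref{prop:restrict}(1) using Proposition~\ref{prop:sBT pres}. Your treatment of the affine case and your observation that $W|_I=0$ on $K_m$ (so that $\mathcal{J}(Q|_I,W|_I)=KK_m$) are exactly as in the paper, and your explicit connectedness reduction is if anything a little more careful than the paper's proof, which applies Lemma~\ref{lem:nDynaffsub} directly.

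The genuine gap is in the $K_m$ case for $m\ge 3$. Your claim that a hereditary algebra is automatically $E$-tame ``since $\Ext^2$ vanishes'' is false. For a $2$-term complex $a$ quasi-isomorphic to a module $M$ one has $\Hom_{K^b(\proj\Lambda)}(a,a[1])\cong\Ext^1_\Lambda(M,M)$, not $\Ext^2$, and for $KK_m$ with $m\ge 3$ there are $g$-vectors whose generic pair of $2$-term complexes has strictly positive $E$-invariant (equivalently, the $g$-vector fan of a wild hereditary algebra is not dense in $K_0(\proj\Lambda)\otimes_{\mathbb{Z}}\mathbb{R}$). Thus $KK_m$ is \emph{not} $E$-tame and Lemma~\ref{lem:sBT E-tame} does not apply. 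The paper closes this case differently: it kills all but two arrows to obtain a further surjection $\mathcal{J}(K_m,0)\twoheadrightarrow\mathcal{J}(A_1^{(1)},0)$, invokes the already-settled affine case for $A_1^{(1)}$, and then lifts via Proposition~\ref{prop:sBT pres}. Your parenthetical alternative of exhibiting the $(1,1)$-dimensional $\theta$-stable representations directly would also work, but the route you call ``cleanest'' does not.
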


\begin{proof}
By Lemma~\ref{lem:nDynaffsub}, there exists a subset $I\subset Q_0$ such that $Q|_I$ is either of affine type or $K_m$ with $m\ge3$. Moreover, Proposition~\ref{prop:restrict} and Lemma~\ref{lem:affine E} imply that
\begin{itemize}
\item there exists a surjective algebra homomorphism $\mathcal{J}(Q,W)\twoheadrightarrow\mathcal{J}(Q|_I,W|_I)$;
\item $(Q|_I,W|_I)$ is non-degenerate;
\item $\mathcal{J}(Q|_I,W|_I)$ is finite-dimensional.
\end{itemize}

If $Q|_I$ is of affine type, then $\mathcal{J}(Q|_I,W|_I)$ is $E$-tame but not $E$-finite by Lemma~\ref{lem:affine E}. Hence, by Lemma~\ref{lem:sBT E-tame}, it satisfies Condition~\ref{cond:sBT}.

For the case where $Q|_I$ is $K_m$ with $m\ge3$, note that $\mathcal{J}(K_m,0)$ admits a surjective homomorphism onto $\mathcal{J}(A_1^{(1)},0)$ (obtained by killing all but two arrows). Since $\mathcal{J}(A_1^{(1)},0)$ satisfies Condition~\ref{cond:sBT} by the previous case, Proposition~\ref{prop:sBT pres} yields that $\mathcal{J}(K_m,0)$ also does.

Finally, applying Proposition~\ref{prop:sBT pres} to the surjection $\mathcal{J}(Q,W)\twoheadrightarrow\mathcal{J}(Q|_I,W|_I)$ shows that $\mathcal{J}(Q,W)$ satisfies Condition~\ref{cond:sBT}.
\end{proof}

Now we are ready to prove Theorem~\ref{thm:conj}.

\begin{proof}[Proof of Theorem~\ref{thm:conj}]
The first assertion follows from Theorem~\ref{thm:fin TFAE} and Lemma~\ref{lem:sBT}. The second one follows from the first part together with Remark \ref{rem: st imp brick conj}.
\end{proof}

\medskip\noindent{\bf Acknowledgements}
The authors would like to thank Kaveh Mousavand, Charles Paquette, Calvin Pfeifer, and Pavel Tumarkin for several helpful conversations.
The second author was supported by JSPS KAKENHI Grant Numbers JP21K13761.

\bibliographystyle{alpha}
\bibliography{all}
\end{document}